\newtheorem{thm}{}[section]
\newtheorem{theorem}[thm]{Theorem}
\newtheorem{corollary}[thm]{Corollary}
\newtheorem{lemma}[thm]{Lemma}
\newtheorem{proposition}[thm]{Proposition}
\theoremstyle{definition}
\newtheorem{definition}[thm]{Definition}
\theoremstyle{remark}
\newtheorem{remark}[thm]{Remark}
\newtheorem{question}[thm]{Question}
\numberwithin{equation}{section}
\newcommand{\Nt}{\ensuremath{\mathcal{N}}}
\newcommand{\Ts}{\ensuremath{\mathcal{T}}}
\newcommand{\TT}{\ensuremath{\mathbb{T}}}
\newcommand{\tb}{\ensuremath{\bm{t}}}
\newcommand{\vv}{\ensuremath{\bm{v}}}
\newcommand{\xx}{\ensuremath{\bm{x}}}
\newcommand{\yy}{\ensuremath{\bm{y}}}
\newcommand{\uu}{\ensuremath{\bm{u}}}
\newcommand{\XX}{\ensuremath{\mathbb{X}}}
\newcommand{\YY}{\ensuremath{\mathbb{Y}}}
\newcommand{\BB}{\ensuremath{\mathcal{B}}}
\newcommand{\NN}{\ensuremath{\mathbb{N}}}
\newcommand{\UU}{\ensuremath{\mathbb{U}}}
\newcommand{\supp}{\operatorname{supp}}
\def\MR#1{}
\begin{document}

\title[Splitting unconditional bases]{Uniqueness of unconditional basis of $\ell_{2}\oplus \Ts^{(2)}$}

\author[F. Albiac]{Fernando Albiac}
\address{Department of Mathematics, Statistics and Computer Sciences, and InaMat$^2$\\ Universidad P\'ublica de Navarra\\
Pamplona 31006\\ Spain}
\email{fernando.albiac@unavarra.es}

\author[J. L. Ansorena]{Jos\'e L. Ansorena}
\address{Department of Mathematics and Computer Sciences\\
Universidad de La Rioja\\
Logro\~no 26004\\ Spain}
\email{joseluis.ansorena@unirioja.es}

\subjclass[2010]{46B15, 46B20, 46B42, 46B45, 46A16, 46A35, 46A40, 46A45}

\keywords{uniqueness of structure, unconditional basis, equivalence of bases, quasi-Banach space, Banach lattice, Hardy spaces, Tsirelson space}

\begin{abstract}
We provide a new extension of Pitt's theorem for compact operators between quasi-Banach lattices, which permits to describe unconditional bases of finite direct sums of Banach spaces $\XX_{1}\oplus\dots\oplus\XX_{n}$ as direct sums of unconditional bases of its summands. The general splitting principle we obtain yields, in particular, that if each $\XX_{i}$ has a unique unconditional basis (up to equivalence and permutation), then $\XX_{1}\oplus \cdots\oplus\XX_{n}$ has a unique unconditional basis too. Among the novel applications of our techniques to the structure of Banach and quasi-Banach spaces we have that the space $\ell_2\oplus \Ts^{(2)}$ has a unique unconditional basis.\end{abstract}

\thanks{F. Albiac acknowledges the support of the Spanish Ministry for Science and Innovation under Grant PID2019-107701GB-I00 for \emph{Operators, lattices, and structure of Banach spaces}. F. Albiac and J.~L. Ansorena acknowledge the support of the Spanish Ministry for Science, Innovation, and Universities under Grant PGC2018-095366-B-I00 for \emph{An\'alisis Vectorial, Multilineal y Aproximaci\'on}.}

\maketitle

\section{Introduction and background}
\noindent
Recall that a quasi-Banach space (in particular a Banach space) $\XX$ with a semi-normalized unconditional basis $\BB=(\xx_{n})_{n\in \Nt}$  is said to have a unique up to permutation (UTAP for short) unconditional basis if any other semi-normalized unconditional basis of $\XX$ is equivalent to a permutation of $\BB$. A  very natural problem in  the theory  is the following:

\begin{question}\label{question:gluel2}
Does the space $\XX\oplus\YY$ have a UTAP unconditional basis provided the Banach spaces $\XX$ and $\YY$ do?
\end{question}

In 1976, Edelstein and Wojtaszczyk (\cite{EdelWoj1976}) gave an affirmative answer to Question~\ref{question:gluel2} in the case when $\XX$ and $\YY$ are one of the only three Banach spaces with a unique unconditional basis (up to equivalence), namely $\ell_{1}$, $\ell_{2}$, and $c_{0}$. These positive results motivated Bourgain et al.\ to study infinite direct sums of these spaces in their 1985 \emph{Memoir} \cite{BCLT1985} with the aim to classify all Banach spaces with a (UTAP) unconditional basis. As an indirect outcome of their work, in relation to Question~\ref{question:gluel2} they obtained that the Banach spaces $\ell_{1}\oplus c_{0}(\ell_{1}^{(n)})$, $c_{0}\oplus \ell_{1}(\ell_{\infty}^{ (n)})$, $\ell_{2}\oplus \ell_{1}(\ell_{2}^{(n)})$, and $\ell_{2}\oplus c_{0}(\ell_{2}^{(n)})$ have a (UTAP) unconditional basis.  However, all hopes of attaining a satisfactory classification were shattered when they found an unexpected space of a
totally different character, namely the 2-convexification of Tsirelson's space, denoted $\Ts^{(2)}$, with this property. The fact that $\Ts^{(2)}$ was a useful example in several other contexts had already been previously noticed in \cite{FigielLindenstraussMilman1977} and \cite{Johnson1979-80}.

This article is a direct continuation of \cite{AAW2020b}, where the authors show that an amalgamation of techniques that have their origin in the papers \cites{DLMR2000,Woj1978} paves the way to obtaining a positive answer to Question~\ref{question:gluel2} in some new cases (see \cite{AAW2020b}*{Theorem 4.4}). Roughly speaking this result works when there is a ``gap'' between
the lattice concavity of $\XX$ and the lattice convexity of $\YY$,
i.e., in the case when there exist $1\le q<r\le\infty$ such that the lattice structure induced by their (unique) unconditional bases on $\XX$ and $\YY$
satisfies a lower $q$-estimate and an upper $r$-estimate, respectively.
This yields, for instance, that if $\XX$ is either $\ell_1$ or $\Ts$, $\YY$ is either $\ell_2$ or $\Ts^{(2)}$, and $\UU$ is either $c_0$ or $\Ts^*$, then the spaces $\XX\oplus \YY$, $\XX\oplus \UU$, $\YY\oplus \UU$, and $\XX\oplus \YY\oplus \UU$ all have a UTAP unconditional basis (see \cite{AlbiacAnsorena2020b}*{Corollary 6.2} and \cite{AAW2020b}*{Theorem 4.7}).

Since the optimal lattice concavity of $\ell_2$ and the optimal lattice convexity of $\Ts^{(2)}$ agree (and it is 2), the space $\ell_2\oplus\Ts^{(2)}$ is out of the scope of the methods from \cite{AAW2020b}. And so are the spaces $\ell_1\oplus\Ts$ and  $c_0\oplus\Ts^*$. However, since $\ell_1$, $c_0$, $\Ts$, and $\Ts^*$ are lattice anti-Euclidean spaces, it can be shown that any direct sum built with some of these spaces has a UTAP unconditional basis (see \cite{AlbiacAnsorena2020b}*{Corollary 6.2}). Again, as $\ell_2$ and $\Ts^{(2)}$ are ``highly'' Euclidean, they are orthogonal to the methods from \cite{AlbiacAnsorena2020b}.

This paper fils this gap in the theory by developing novel techniques which permit to prove, in particular, that $\ell_2\oplus\Ts^{(2)}$ has a UTAP unconditional basis.

All our notations and unexplained terminology are those of \cite{AAW2020b}, where the reader will also find a thorough exposition of the background problems that motivate this work.

\section{Yet another generalization of Pitt's theorem for operators between quasi-Banach lattices}\label{sect:SpBanach}
\noindent
The classical Pitt's theorem \cite{Pitt1936} asserts that  every continuous linear operator from $\ell_{p}$ into $\ell_{q}$,  $1\le q<p\le \infty$, is compact. Since the publication of this result, many pairs $(\XX, \YY)$ of Banach spaces with the property that every operator between $\XX$ and $\YY$ is compact have been isolated and many variants of this result in different settings have been studied by several authors. Defant et al.\ addressed in \cite{DLMR2000} the same question for operators between quasi-Banach sequence spaces, i.e, quasi-Banach spaces  endowed with a lattice structure induced by an unconditional basis. They proved that every continuous linear operator from $\XX$ into $\YY$ is compact provided  there is a gap between the lattice estimates of $\XX$ and $\YY$.

This section is geared towards obtaining an extension of \cite{DLMR2000}*{Theorem 1} in the case when the optimal $p$ so that $\XX$ satisfies an upper $p$-estimate and the optimal $q$ so that $\YY$ satisfies a lower $q$-estimate coincide. To that end we start with a lemma that follows ideas from \cite{DLMR2000}.

\begin{lemma}\label{lem:compactBBS}
Let $T\colon\YY\to\XX$ be a bounded non-compact linear operator between quasi-Banach spaces. Suppose that $\XX$ has a Schauder basis $\BB_x$. Then there exist a semi-normalized block basic sequence $\BB_u$ with respect to $\BB_x$ and a semi-normalized basic sequence $\BB_v$ in $\YY$ such that  $\BB_u$ and $T(\BB_v)$ are congruent in $\XX$. Moreover, if $\YY$ has a Schauder basis $\BB_y$, we can choose $\BB_v$ to be a block basic sequence with respect to $\BB_y$.
\end{lemma}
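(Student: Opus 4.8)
The plan is to drive a gliding-hump construction with the Schauder basis $\BB_x$ of $\XX$, after first extracting from the range of $T$ a semi-normalized sequence whose $\BB_x$-coordinates vanish in the limit.

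I would begin by turning non-compactness into separation. Since $T$ is bounded and not compact, $T(B_\YY)$ is not totally bounded, so there are $\delta>0$ and a bounded sequence $(y_n)$ in $\YY$ with $\|Ty_n-Ty_m\|\ge\delta$ whenever $n\ne m$. If $(x_k^*)_k$ denotes the sequence of coordinate functionals of $\BB_x$, then for each $k$ the scalar sequence $(x_k^*(Ty_n))_n$ is bounded, so a diagonal argument produces a subsequence along which $x_k^*(Ty_n)$ converges as $n\to\infty$ for every $k$. Passing to it and setting $\vv_n=y_{2n}-y_{2n-1}$, the quasi-triangle inequality bounds $\|\vv_n\|$ from above, the separation gives $\|T\vv_n\|\ge\delta$, and $x_k^*(T\vv_n)\to0$ as $n\to\infty$ for each fixed $k$. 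Thus $w_n:=T\vv_n$ is semi-normalized in $\XX$ and $\BB_x$-coordinatewise null.

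Next I would run the sliding hump in $\XX$. Using that $\BB_x$ is Schauder (so $P_N w\to w$, where $P_N$ are the natural projections) together with coordinatewise nullity (so $\|P_N w_n\|\to0$ in $n$ for each fixed $N$), I would inductively select indices $n_1<n_2<\cdots$ and levels $N_0<N_1<\cdots$ so that $\|P_{N_{j-1}}w_{n_j}\|$ and $\|w_{n_j}-P_{N_j}w_{n_j}\|$ are both smaller than a preassigned $\varepsilon_j$, with $\sum_j\varepsilon_j$ as small as desired. The vectors $\uu_j=(P_{N_j}-P_{N_{j-1}})w_{n_j}$ then form a semi-normalized block basic sequence $\BB_u$ with respect to $\BB_x$ whose gaps to $w_{n_j}=T\vv_{n_j}$ are summably small; by the principle of small perturbations this makes $\BB_u$ and $T(\BB_v)$ congruent, where $\BB_v=(\vv_{n_j})_j$, and forces $(T\vv_{n_j})_j$ to be basic and equivalent to $\BB_u$. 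That $\BB_v$ is semi-normalized is immediate from $\delta/\|T\|\le\|\vv_{n_j}\|$ and the uniform upper bound on $\|\vv_{n_j}\|$.

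The main obstacle is to guarantee that $\BB_v$ itself is basic, which does not follow formally from the basicity of its image because $T$ need not be bounded below: the identity $TR_m=Q_mT$ relating the formal partial-sum operators $R_m$ on $\spn\BB_v$ to the genuine, uniformly bounded basis projections $Q_m$ of $(T\vv_{n_j})$ only controls $\|TR_m(\cdot)\|$, not $\|R_m(\cdot)\|$. In the situation contemplated in the ``moreover'' clause this difficulty evaporates: if $\YY$ carries a Schauder basis $\BB_y$, I would instead choose the $\vv_n$, from the outset, to be (small perturbations of) a block basic sequence with respect to $\BB_y$, which is automatically basic, running the two humps in tandem. In the general case my plan is to spend the freedom still available in the selection. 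A key elementary observation is that differences of consecutive, disjointly supported semi-normalized blocks stay bounded below, since $\|\uu_i\|=\|P_{N_i}(\uu_i-\uu_j)\|\le K\|\uu_i-\uu_j\|$ for $i<j$, where $K=\sup_N\|P_N\|$; hence one may replace $\vv_{n_j}$ by second differences $\vv_{n_{2j}}-\vv_{n_{2j-1}}$ without destroying either the semi-normalization of the images or the congruence with a block basis, while gaining that the domain sequence is weakly null in a suitable sense. A basic-subsequence selection then applies. Carrying out this refinement while preserving all the earlier estimates is where the bookkeeping is heaviest.
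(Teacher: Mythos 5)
Your construction of the ``moreover'' part is essentially the paper's proof: extract $(f_n)$ in $B_\YY$ with $(Tf_n)$ $\delta$-separated, diagonalize so that the relevant coordinates converge, pass to differences to obtain a semi-normalized, coordinatewise null sequence, run a gliding hump, and invoke the small perturbation principle. One detail to make explicit: to realize $\BB_v$ as a small perturbation of a block basic sequence with respect to $\BB_y$ you must also diagonalize the $\YY$-coordinates $(\yy_k^*(f_n))_{n}$ at the outset, exactly as you do for $(\xx_k^*(Tf_n))_n$; with that, ``running the two humps in tandem'' is precisely what the paper does.

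Where your proposal diverges is the general case, and there it has a genuine gap. You are right to worry that basicness of $T(\BB_v)$ does not transfer to $\BB_v$ when $T$ is not bounded below (the paper dismisses this case as ``similar and easier'' and does not address the point), but your proposed repair does not work. Passing to second differences cannot make the domain sequence ``weakly null in a suitable sense'': a general quasi-Banach space may have trivial dual, so there is no weak topology to exploit and no Bessaga--Pe{\l}czy\'{n}ski-type selection principle to invoke; it is not known in general that a bounded sequence in a quasi-Banach space admits a basic subsequence. In the only non-locally-convex instance the paper actually uses (Proposition~\ref{prop:1}(i), where $\YY$ is $q$-Banach with $0<q\le1$ and $\BB_u$ satisfies a lower $q$-estimate), the basicness of $\BB_v$ is obtained \emph{a posteriori}: the chain $\bigl(\sum_j|\lambda_j|^q\bigr)^{1/q}\lesssim\bigl\Vert\sum_j\lambda_j T(\vv_j)\bigr\Vert\le\Vert T\Vert\,\bigl\Vert\sum_j\lambda_j\vv_j\bigr\Vert\lesssim\bigl(\sum_j|\lambda_j|^q\bigr)^{1/q}$ shows $\BB_v$ is equivalent to the canonical $\ell_q$-basis, hence basic. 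If you want the lemma in full generality you should either drop the claim that $\BB_v$ is basic from the conclusion or add a hypothesis (such as the $q$-Banach one) that lets you recover it afterwards; as written, the last paragraph of your argument is not a proof.
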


\begin{proof}As the proof of the lemma without assuming that $\YY$ has a Schauder basis is similar and easier, we will carry out the proof of the ``moreover'' part.

By assumption, there is $(f_n)_{n=1}^\infty$ in $B_\YY$ such that $(T(f_n))_{n=1}^\infty$ has no convergent subsequences. For each $k\in\NN$, the scalar sequences $(\yy_k^*(f_n))_{n=1}^\infty$ and $(\xx_k^*(T(f_n)))_{n=1}^\infty$ are bounded. Using a Cantor's diagonal argument, passing to a subsequence we assume that these sequences converge for every $k\in\NN$. Since $(T(f_n))_{n=1}^\infty$ is not a Cauchy sequence, there exist $\delta>0$ and an increasing sequence $(n_j)_{j=1}^\infty$ in $\NN$ such that
\[
\Vert T(f_{n_{2j-1}}) - T(f_{n_{2j}})\Vert\ge\delta, \quad j\in\NN.
\]
Set $g_j=f_{n_{2j-1}}- f_{n_{2j}}$ for $j\in\NN$. The sequences $(g_j)_{j=1}^\infty$ and $(T(g_j))_{j=1}^\infty$ are semi-normalized with
\[
\lim_j \yy_k^*(f_j)=0=\lim_j \xx_k^*(T(f_j))
\]
for all $k\in\NN$. Let $(\varepsilon_j)_{j=1}^\infty$ be a null sequence of positive numbers. By the gliding hump technique, passing to a subsequence we infer that there are block basic sequences $(\uu_j)_{j=1}^\infty$ and $(\vv_j)_{j=1}^\infty$ with respect to $\BB_x$ and $\BB_y$ respectively such that
\[
\Vert g_j -\vv_j\Vert\le\varepsilon_j, \quad \Vert T(g_j) -\uu_j\Vert\le\varepsilon_j, \quad j\in\NN.
\]
Assume without loss of generality that $\XX$ is $p$-Banach for some $0<p\le 1$. Then
\begin{align*}
\Vert T(\vv_j) - \uu_j\Vert^p
&\le \Vert T(\vv_j) - T(g_j) +T(g_j) - \uu_j\Vert^p\\
& \le \Vert T(\vv_j) - T(g_j)\Vert^p +\Vert T(g_j) - \uu_j\Vert^p\\
&\le (1+\Vert T\Vert^p) \varepsilon^p_j.
\end{align*}
Choosing $(\varepsilon_j)_{j=1}^\infty$ small enough, the small perturbation principle yields that $\BB_u=(\uu_j)_{j=1}^\infty$ and $\BB_v=(\vv_j)_{j=1}^\infty$ satisfy the desired properties.
\end{proof}

\begin{remark}\label{rmk:lpembeds}
Applying Lemma~\ref{lem:compactBBS} with $\YY=\ell_q$, $0<q\le\infty$, yields that if $\XX$ contains an isomorphic copy of $\ell_q$ (we replace $\ell_\infty$ with $c_0$), then $\XX$ has a block basic sequence equivalent to the canonical $\ell_{q}$-basis.
\end{remark}

\begin{remark}\label{rmk:subspace}
Applying Lemma~\ref{lem:compactBBS} with $\YY\subseteq \XX$ yields that any infinite-dimensional subspace $\YY$ of $\XX$ contains a sequence congruent to a block basic sequence.
\end{remark}

\begin{proposition}\label{prop:1}
Let $T\colon\YY\to\XX$ be a bounded non-compact linear operator between quasi-Banach spaces. Suppose that $\XX$ has an unconditional basis $\BB_x$ that satisfies a lower $q$-estimate for some $0<q<\infty$. Suppose also that:
\begin{enumerate}[label={{(\roman*)}}, leftmargin=*, widest=ii]

\item Either $0<q\le 1$ and $\YY$ is a $q$-Banach space; or

\item\label{prop:C2} $\YY$ has an unconditional basis $\BB_y$ that satisfies an upper $q$-estimate.
\end{enumerate}
Then there is a basic sequence $\BB_v$ in $\YY$ and a basic sequence in $\XX$ finitely disjointly supported with respect to $\BB_x$ which is equivalent to the canonical $\ell_{q}$-basis, to $\BB_v $, and to $T(\BB_v)$. Moreover, if \ref{prop:C2} holds, $\BB_v$ can be chosen to be finitely disjointly supported with respect to $\BB_y$.
\end{proposition}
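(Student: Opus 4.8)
The plan is to feed the operator $T$ into Lemma~\ref{lem:compactBBS} and then squeeze all the resulting sequences against the canonical $\ell_q$-basis, exploiting the fact that the exponent governing the lower estimate on the target and the upper estimate on the source is the \emph{same} $q$. First I would apply Lemma~\ref{lem:compactBBS}: since the unconditional basis $\BB_x$ is in particular a Schauder basis (and, in case \ref{prop:C2}, so is $\BB_y$), the lemma produces a semi-normalized block basic sequence $\BB_u=(\uu_j)_j$ with respect to $\BB_x$ and a semi-normalized basic sequence $\BB_v=(\vv_j)_j$ in $\YY$ --- a block basic sequence with respect to $\BB_y$ in case \ref{prop:C2} --- such that $\BB_u$ and $T(\BB_v)$ are congruent, hence equivalent, in $\XX$. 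The sequence $\BB_u$ is exactly the ``finitely disjointly supported'' basic sequence in $\XX$ that the statement asks for.

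Next I would record the two one-sided domination estimates coming from the lattice hypotheses, valid for all finitely supported scalar families $(a_j)_j$. On the target side, because $\BB_u$ is a block basic sequence it is disjointly supported with respect to $\BB_x$, so the lower $q$-estimate of $\BB_x$ combined with $\inf_j\Vert\uu_j\Vert>0$ gives
\[
\Big\Vert\sum_j a_j \uu_j\Big\Vert \gtrsim \Big(\sum_j |a_j|^q\Big)^{1/q},
\]
that is, $\BB_u$ dominates the unit vector basis of $\ell_q$. On the source side, in case \ref{prop:C2} the sequence $\BB_v$ is disjointly supported with respect to $\BB_y$, so the upper $q$-estimate of $\BB_y$ together with $\sup_j\Vert\vv_j\Vert<\infty$ yields
\[
\Big\Vert\sum_j a_j \vv_j\Big\Vert \lesssim \Big(\sum_j |a_j|^q\Big)^{1/q},
\]
while in case (i) the $q$-triangle inequality available in any $q$-Banach space gives the very same bound directly, with no disjointness required. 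In either case $\BB_v$ is dominated by the $\ell_q$-basis.

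Finally I would chain these estimates through $T$. Boundedness of $T$ and the equivalence $\BB_u\sim T(\BB_v)$ give
\[
\Big\Vert\sum_j a_j \uu_j\Big\Vert \approx \Big\Vert\sum_j a_j T(\vv_j)\Big\Vert = \Big\Vert T\Big(\sum_j a_j \vv_j\Big)\Big\Vert \le \Vert T\Vert\,\Big\Vert\sum_j a_j \vv_j\Big\Vert \lesssim \Big(\sum_j |a_j|^q\Big)^{1/q}.
\]
Together with the lower-side estimate this pins $\BB_u$ to be equivalent to the $\ell_q$-basis, and hence so is $T(\BB_v)$ by congruence. Running the chain in the reverse direction --- transporting the lower bound on $T(\BB_v)$ back through $T$ via $\Vert\sum_j a_j T(\vv_j)\Vert\le\Vert T\Vert\,\Vert\sum_j a_j\vv_j\Vert$ --- shows that the $\ell_q$-basis is dominated by $\BB_v$, which, combined with the upper-side estimate, gives $\BB_v\sim\ell_q$. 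By transitivity, $\BB_u$, $\BB_v$, and $T(\BB_v)$ are mutually equivalent and equivalent to the canonical $\ell_q$-basis, which is the assertion.

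The only genuinely delicate point is the precise meaning of ``congruent'': I need $\BB_u$ and $T(\BB_v)$ to be honestly equivalent with uniform constants, so that norm comparisons transfer freely between $\sum_j a_j\uu_j$ and $\sum_j a_j T(\vv_j)$; this is exactly what the small-perturbation argument inside Lemma~\ref{lem:compactBBS} delivers. Everything else is bookkeeping of the matched exponent $q$: the lower $q$-estimate on $\XX$ and the upper $q$-estimate (or $q$-Banach structure) on $\YY$ squeeze the block sequences onto $\ell_q$ precisely in the borderline regime where the gap arguments of \cite{DLMR2000} no longer force compactness but instead produce a common copy of $\ell_q$.
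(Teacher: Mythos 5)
Your proposal is correct and follows essentially the same route as the paper: apply Lemma~\ref{lem:compactBBS} to produce the congruent pair $\BB_u$, $T(\BB_v)$, note that the lower $q$-estimate makes the disjointly supported semi-normalized $\BB_u$ (hence $T(\BB_v)$) dominate the $\ell_q$-basis while the upper $q$-estimate (or the $q$-triangle inequality in case (i)) makes the $\ell_q$-basis dominate $\BB_v$, and then close the loop with $\Vert\sum_j a_j T(\vv_j)\Vert\le\Vert T\Vert\,\Vert\sum_j a_j\vv_j\Vert$. This is exactly the paper's chain of inequalities, so nothing further is needed.
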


\begin{proof}
Use Lemma~\ref{lem:compactBBS} to pick a semi-normalized sequence $\BB_u$ disjointly finitely supported with respect to $\BB_x$ and a semi-normalized sequence $\BB_v=(\vv_j)_{j=1}^\infty$ in $\YY$ such that $\BB_u$ is equivalent to $T(\BB_v)$. Then $T(\BB_v)$ dominates the unit vector basis of $\ell_{q}$ and, in turn, the unit vector basis of $\ell_{q}$ dominates $\BB_v$. Hence, there are constants $C_1$ and $C_2$ such that
\begin{align*}
\left( \sum_{j=1}^\infty |\lambda_j|^q\right)^{1/q}
&\le C_1 \left\Vert \sum_{j=1}^\infty \lambda_j T(\vv_j) \right\Vert\\
&\le C_1\Vert T\Vert \left\Vert \sum_{j=1}^\infty \lambda_j \vv_j \right\Vert\\
&\le C_1 C_2 \Vert T\Vert \left( \sum_{j=1}^\infty |\lambda_j|^q\right)^{1/q},
\end{align*}
for any $(\lambda_j)_{j=1}^\infty\in c_{00}$.
\end{proof}

The proof of the following consequence of Proposition~\ref{prop:1} is straightforward.

\begin{corollary}\label{cor:1}
Let $\XX$ be a quasi-Banach space with an unconditional basis $\BB_x$ which satisfies a lower $q$-estimate for some $0<q<\infty$. Let $\YY$ be another quasi-Banach space. Suppose that either $0<q\le 1$ and $\YY$ is a $q$-Banach space, or $\YY$ has an unconditional basis $\BB_y$ which satisfies an upper $q$-estimate. Then every strictly singular operator from $\YY$ into $\XX$ is compact.
\end{corollary}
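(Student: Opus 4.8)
The plan is to argue by contradiction, reducing the statement to a direct application of Proposition~\ref{prop:1}. Suppose $T\colon\YY\to\XX$ is strictly singular but \emph{not} compact. The hypotheses imposed on $\XX$ and $\YY$ in the corollary are exactly those of Proposition~\ref{prop:1} (the dichotomy ``$0<q\le1$ with $\YY$ a $q$-Banach space, or $\YY$ has an unconditional basis with an upper $q$-estimate'' matches its case \ref{prop:C2}). Hence I would invoke the proposition directly to produce a basic sequence $\BB_v=(\vv_j)_{j=1}^\infty$ in $\YY$ and a basic sequence in $\XX$, finitely disjointly supported with respect to $\BB_x$, all three of $\BB_v$, $T(\BB_v)$, and the sequence in $\XX$ being equivalent to the canonical $\ell_q$-basis. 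In particular $\BB_v$ is equivalent to $T(\BB_v)$.

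The only step that needs a word of justification is the reinterpretation of this equivalence. Since each $T(\vv_j)$ is literally the image of $\vv_j$ under $T$, the equivalence of $\BB_v$ with $T(\BB_v)$ says precisely that $T$ is bounded below on $Z:=\overline{\spn}(\BB_v)$: writing $y=\sum_j\lambda_j\vv_j$, the lower $\ell_q$-estimate enjoyed by $T(\BB_v)$ together with the upper $\ell_q$-estimate enjoyed by $\BB_v$ gives $\Vert Ty\Vert\gtrsim(\sum_j|\lambda_j|^q)^{1/q}\gtrsim\Vert y\Vert$. Thus the restriction $T|_Z$ is an isomorphism onto its closed image $T(Z)$.

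Finally, because $\BB_v$ is equivalent to the $\ell_q$-basis it is a semi-normalized basic sequence, so $Z$ is an infinite-dimensional closed subspace of $\YY$ on which $T$ is an isomorphism. This contradicts the strict singularity of $T$, and the contradiction forces $T$ to be compact. I do not anticipate any genuine obstacle: the substantive content is entirely contained in Proposition~\ref{prop:1}, and the only bookkeeping is the immediate passage from ``$\BB_v$ is equivalent to $T(\BB_v)$'' to ``$T$ is bounded below on $\overline{\spn}(\BB_v)$'', which is nothing more than unwinding the definition of equivalence of basic sequences.
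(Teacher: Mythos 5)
Your proof is correct and is precisely the argument the paper has in mind (the paper simply declares the corollary a ``straightforward'' consequence of Proposition~\ref{prop:1} and omits the details): a non-compact $T$ would, by the proposition, be bounded below on the infinite-dimensional closed span of $\BB_v$, contradicting strict singularity. The only nitpick is the parenthetical claiming the corollary's dichotomy ``matches case \ref{prop:C2}''; in fact the first alternative matches case (i) and the second matches case \ref{prop:C2}, but this does not affect the argument.
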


Suppose  there is a gap between the lattice concavity induced by the unconditional basis $\BB_x$ of a quasi-Banach space $\XX$ and  the lattice  convexity induced by the unconditional basis $\BB_y$ of a quasi-Banach space $\YY$, i.e.,  $\BB_x$ satisfies a  lower $q$-estimate and $\BB_y$ satisfies an upper $r$-estimate for some  $0<q<r\le \infty$. Then,  $\BB_x$ satisfies a lower $q$-estimate, $\BB_y$ satisfies an upper $q$-estimate, and $\ell_q$ does not embed into $\YY$.
Proposition~\ref{prop:1} tells us that this assumption on the bases $\BB_x$ and $\BB_y$ is enough to guarantee the  compactness of operators between $\XX$ and $\YY$ in the spirit of Pitt's theorem. This motivates the following definition.

\begin{definition}\label{def:IC}
We say that a finite family $(\BB_j)_{j=1}^n$ of unconditional bases of respective quasi-Banach spaces $(\XX_j)_{j=1}^n$ is \emph{lattice-estimate disjoint} if the following conditions hold for all $1\le j \le n-1$:
\begin{enumerate}[label={{(\roman*)}}, leftmargin=*, widest=iii]

\item There exists a non-decreasing sequence $(q_j)_{j=1}^{n-1}$ in $(0,\infty]$ such that $\BB_j$ satisfies a lower $q_j$-estimate;

\item $\BB_{j+1}$ satisfies an upper $q_j$-estimate; and

\item either $\XX_j$ or $\XX_{j+1}$  contain no isomorphic copy of $\ell_{q_j}$.

\end{enumerate}
\end{definition}

\begin{corollary}\label{cor:CCC}
Let $\XX$ and $\YY$ be quasi-Banach spaces with respective unconditional bases $\BB_x$ and $\BB_y$. Suppose that the pair $(\BB_x,\BB_y)$ is lattice-estimate disjoint. Then every bounded linear operator from $\YY$ into $\XX$ is compact.
\end{corollary}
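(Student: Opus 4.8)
The plan is to argue by contradiction, feeding the non-compactness of a putative bad operator into Proposition~\ref{prop:1} and colliding the resulting embeddings with condition~(iii) of Definition~\ref{def:IC}. Suppose $T\colon\YY\to\XX$ is bounded but not compact, and let $q=q_1\in(0,\infty]$ be the exponent provided by the lattice-estimate disjointness, so that $\BB_x$ satisfies a lower $q$-estimate, $\BB_y$ satisfies an upper $q$-estimate, and at least one of $\XX$, $\YY$ contains no isomorphic copy of $\ell_q$ (with $\ell_\infty$ read as $c_0$, as in Remark~\ref{rmk:lpembeds}).

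First I would dispose of the main case $0<q<\infty$, where essentially no work remains. Condition~\ref{prop:C2} of Proposition~\ref{prop:1} is verbatim the hypothesis that $\BB_y$ satisfies an upper $q$-estimate, so the proposition applies and yields a basic sequence $\BB_v$ in $\YY$ together with a basic sequence in $\XX$, finitely disjointly supported with respect to $\BB_x$, both equivalent to the canonical $\ell_q$-basis. The closed spans of these two sequences are isomorphic copies of $\ell_q$ sitting inside $\YY$ and $\XX$ respectively, so $\ell_q$ embeds into \emph{both} summands, contradicting condition~(iii).

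The case $q=\infty$ falls outside Proposition~\ref{prop:1}, and handling it is where I expect the only genuine work. Here the lower $\infty$-estimate on $\BB_x$ is automatic from unconditionality, so the content lies in the upper $\infty$-estimate on $\BB_y$. I would apply Lemma~\ref{lem:compactBBS} directly to produce semi-normalized block basic sequences $\BB_v=(\vv_j)_{j=1}^\infty$ with respect to $\BB_y$ and $\BB_u$ with respect to $\BB_x$ with $\BB_u$ congruent to $T(\BB_v)$. The upper $\infty$-estimate gives $\Vert\sum_j a_j\vv_j\Vert\le C\max_j|a_j|$, while semi-normalization and unconditionality of the block sequence give the matching lower bound, so $\BB_v$ is equivalent to the $c_0$-basis and $c_0$ embeds in $\YY$. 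Pushing the upper estimate through the bounded operator $T$ shows that $\BB_u\approx T(\BB_v)$ is dominated by the $c_0$-basis, and the same semi-normalization-plus-unconditionality argument applied to $\BB_u$ supplies the lower bound; hence $\BB_u$ is equivalent to the $c_0$-basis and $c_0$ embeds in $\XX$ too. As before this contradicts condition~(iii), and the corollary follows.
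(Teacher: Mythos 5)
Your proof is correct and, in the main case, is exactly the argument the paper intends: the corollary is stated without proof as an immediate consequence of Proposition~\ref{prop:1}, whose conclusion plants isomorphic copies of $\ell_q$ inside both $\XX$ and $\YY$ and thereby collides with condition~(iii) of Definition~\ref{def:IC}. Your separate treatment of $q=\infty$ is a genuine (and welcome) supplement rather than a redundancy: Proposition~\ref{prop:1} is stated only for $0<q<\infty$, while Definition~\ref{def:IC} allows $q_j=\infty$, so the paper's implicit derivation does not literally cover that case. Your handling of it is sound --- Lemma~\ref{lem:compactBBS} produces semi-normalized block basic sequences, the upper $\infty$-estimate plus semi-normalization and unconditionality of blocks of an unconditional basis force both $\BB_v$ and $\BB_u$ to be equivalent to the unit vector basis of $c_0$, and the $\ell_\infty\leadsto c_0$ convention (without which condition~(iii) would be vacuous for separable spaces and the statement false, e.g.\ for the identity on $c_0$) supplies the contradiction. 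The only thing I would add is a one-line justification that a semi-normalized block basic sequence with respect to an unconditional basis is itself unconditional, which is what licenses the ``matching lower bound'' $\max_j|a_j|\lesssim\Vert\sum_j a_j\vv_j\Vert$.
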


For instance, given $0<q<\infty$, the $q$-convexified Tsirelson space $\Ts^{(q)}$ satisfies an upper $q$-estimate and contains no copy of $\ell_q$. Thus, Corollary~\ref{cor:CCC} applies to $\XX=\ell_{q}$ and $\YY=\Ts^{(q)}$ despite the fact that there is no gap between the lattice estimates of both spaces.

\begin{remark}\label{rmk:stopping} Let $(\BB_j)_{j=1}^n$ be a family of lattice-estimate disjoint unconditional bases, and let $(q_j)_{j=1}^{n-1}$ be as in Definition~\ref{def:IC}. Suppose that for some $q\in (0,\infty]$ and $i\in\{2,\dots,n-1\}$ we have $q_{i-1}=q_{i}=q$. Then $\BB_i$ satisfies both a lower $q$-estimate and an upper $q$-estimate so that it is equivalent to the canonical basis of $\ell_{q}$. Therefore, $\XX_{i-1}$ contains no copy of $\ell_q$. This implies that there can be at most two indices $j$ with $q_j=q$.
\end{remark}

\section{Splitting unconditional bases of direct sums of Banach spaces}
\noindent
We say that a finite family $(\XX_j)_{j=1}^n$ of Banach spaces is \emph{splitting for unconditional bases} if every unconditional basis of $\bigoplus_{j=1}^{n} \XX_j$ splits into basic sequences $(\BB_j)_{j=1}^{n}$ with $[\BB_j]\simeq \XX_j$ for each $j=1,\dots,n$.

In 1976, Edelstein and Wojtaszczyk \cite{EdelWoj1976} proved that any finite family $(\ell_{p_{j}})_{j=1}^{n}$ for $1\le p_{j}\le \infty$ (with the convention that $\ell_{\infty}$ means $c_{0}$) is splitting for unconditional bases. This important structural property has been  recently studied for general Banach spaces in \cite{AAW2020b},
where the authors establish sufficient conditions for a finite family of unconditional bases to be splitting    in terms of gaps between lattice estimates of the bases. Our main theorem of this section relies on the refined notion of lattice-estimate disjointness to  improve \cite{AAW2020b}*{Theorem 4.4}.

\begin{theorem}\label{thm:split}
Let $(\XX_j)_{j=1}^n$ be a finite family of Banach spaces. Suppose that each $\XX_j$ has an unconditional basis $\BB_j$ and that $(\BB_j)_{j=1}^n$ is lattice-estimate disjoint. Then $(\XX_j)_{j=1}^n$ is splitting for unconditional bases. In particular, if $\XX_j$ has a UTAP unconditional basis for all $1\le j \le n$, then $\bigoplus_{j=1}^n \XX_j$ has a UTAP unconditional basis.
\end{theorem}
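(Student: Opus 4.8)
The plan is to argue by induction on $n$, with the trivial base case $n=1$, reducing the inductive step to splitting off the first summand.

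\emph{Reduction.} Set $\YY=\bigoplus_{j=2}^{n}\XX_j$, equipped with the direct-sum unconditional basis $\BB_\YY=\bigcup_{j\ge 2}\BB_j$. First I would verify that the pair $(\BB_1,\BB_\YY)$ is lattice-estimate disjoint in the sense of Definition~\ref{def:IC}. Since each $\BB_j$ with $j\ge 2$ satisfies an upper $q_{j-1}$-estimate and $q_{j-1}\ge q_1$, every such $\BB_j$ satisfies an upper $q_1$-estimate, and hence so does $\BB_\YY$. For the non-containment clause, if $\XX_1$ contains no copy of $\ell_{q_1}$ there is nothing to check; otherwise clause (iii) of Definition~\ref{def:IC} for $j=1$ forces $\XX_2$ to contain no copy of $\ell_{q_1}$, and Remark~\ref{rmk:stopping} applied with $i=2$ forces $q_1<q_2$, so that for every $j\ge 3$ the basis $\BB_j$ satisfies an upper $q_{j-1}$-estimate with $q_{j-1}\ge q_2>q_1$ and thus $\XX_j$ contains no copy of $\ell_{q_1}$; a localization argument based on the upper $q_1$-estimate of $\BB_\YY$ together with Remark~\ref{rmk:lpembeds} then shows $\YY$ contains no copy of $\ell_{q_1}$. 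In either case Corollary~\ref{cor:CCC} applies to $(\BB_1,\BB_\YY)$, so \textbf{every bounded operator from $\YY$ into $\XX_1$ is compact}.

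\emph{Core splitting lemma.} The crux is the case $n=2$: if $\XX_1\oplus\YY$ is such that every operator $\YY\to\XX_1$ is compact, then every unconditional basis $(\uu_i)_{i\in I}$ of $\XX_1\oplus\YY$ splits into a part spanning a space isomorphic to $\XX_1$ and a part spanning a space isomorphic to $\YY$. Writing $P$ and $R$ for the canonical projections onto $\XX_1$ and $\YY$, I would partition $I$ into $I_1=\{i:\Vert P\uu_i\Vert\ge\Vert R\uu_i\Vert\}$ and $I_2=I\setminus I_1$; by unconditionality this yields an internal direct sum $\XX_1\oplus\YY=[\uu_i:i\in I_1]\oplus[\uu_i:i\in I_2]$. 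The goal is to show that $R$ restricted to $[\uu_i:i\in I_2]$ and $P$ restricted to $[\uu_i:i\in I_1]$ are isomorphisms onto the respective summands. The mechanism is that the cross-projection $P\vert_{[\uu_i:i\in I_2]}$, an operator into the lower-$q_1$-estimate space $\XX_1$ from the ``$\YY$-dominant'' block, is \emph{compact}: were it non-compact, Proposition~\ref{prop:1} would produce a block basic sequence in $\XX_1$ equivalent to the canonical $\ell_{q_1}$-basis and simultaneously realized inside $[\uu_i:i\in I_2]\subseteq\YY$, contradicting that $\ell_{q_1}$ embeds into neither $\XX_1$ nor $\YY$. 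Granting this compactness, the isometric inclusion $[\uu_i:i\in I_2]\hookrightarrow\XX_1\oplus\YY$ differs from $R\vert_{[\uu_i:i\in I_2]}$ by the compact operator $P\vert_{[\uu_i:i\in I_2]}$, so $R\vert_{[\uu_i:i\in I_2]}$ is upper semi-Fredholm and hence bounded below on a finite-codimension subspace; standard complementation bookkeeping then upgrades this to $[\uu_i:i\in I_2]\simeq\YY$ and, via the complementary projection, to $[\uu_i:i\in I_1]\simeq\XX_1$.

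\emph{Conclusion and UTAP.} Applying the core lemma to $\XX_1\oplus\YY$ splits the given basis into a piece spanning a space $\simeq\XX_1$ and a piece spanning a space $\simeq\YY\cong\bigoplus_{j\ge2}\XX_j$, whose family $(\BB_2,\dots,\BB_n)$ is again lattice-estimate disjoint with the truncated sequence $(q_2,\dots,q_{n-1})$; the induction hypothesis then completes the splitting. For the ``in particular'' statement, once the arbitrary basis has been split into pieces $(\BB_j')_{j=1}^n$ with $[\BB_j']\simeq\XX_j$, the UTAP property of each $\XX_j$ shows $\BB_j'$ is equivalent to a permutation of $\BB_j$, so reassembling shows the original basis is equivalent to a permutation of $\bigcup_j\BB_j$, the canonical basis of $\bigoplus_j\XX_j$, which is thus UTAP.

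\emph{Main obstacle.} I expect the delicate point to be the compactness of the cross-projection $P\vert_{[\uu_i:i\in I_2]}$. The difficulty is that $[\uu_i:i\in I_2]$ carries the \emph{arbitrary} basis $(\uu_i)_{i\in I_2}$, for which an upper $q_1$-estimate is not available a priori: the vectors $R\uu_i$ may overlap badly in the coordinates of $\BB_\YY$, so the estimate cannot be read off directly. Transferring the upper $q_1$-estimate from $\BB_\YY$ to this block, whether through gliding-hump approximation to genuinely disjointly supported vectors and the small-perturbation principle or by routing the contradiction through Proposition~\ref{prop:1} to manufacture a forbidden copy of $\ell_{q_1}$, is where the lattice estimates and the non-containment hypothesis must be combined with care, and is the technical heart of the argument.
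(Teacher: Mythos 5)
Your global strategy---peel off one summand at a time by checking that the pair $(\BB_1,\bigoplus_{j\ge 2}\BB_j)$ is itself lattice-estimate disjoint, invoke Corollary~\ref{cor:CCC} to get compactness of every operator from $\bigoplus_{j\ge 2}\XX_j$ into $\XX_1$, and induct---is sound and is essentially the paper's strategy. (The paper peels off the last summand via Lemma~\ref{lem:SUBfromcouples}~\ref{lem:SUB:2} and handles repeated values of $q_j$ by a separate grouping step, whereas you absorb the repeated-$q$ case into the reduction using Remark~\ref{rmk:stopping}; both reductions lean on Lemma~\ref{lem:lpdirectsum} for the non-containment clause, which is the ``localization argument'' you allude to and should be made precise by citing that lemma.)

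The problem is your ``core splitting lemma.'' The paper does not prove that compactness of all operators between the two summands implies splitting: it quotes it as Theorem~\ref{Woj:Split}, i.e.\ \cite{Woj1978}*{Theorem 2.1}. Your attempted direct proof has a genuine gap exactly where you flag it, and it is not closable with the tools available in this paper. The cross-projection $P\vert_{[\uu_i\colon i\in I_2]}$ is defined on a subspace of $\XX_1\oplus\YY$ that does not embed canonically into $\YY$, so Corollary~\ref{cor:CCC} says nothing about it; and Proposition~\ref{prop:1} is inapplicable because in the cases of interest ($q_1>1$, e.g.\ $q_1=2$ for $\ell_2\oplus\Ts^{(2)}$) its hypothesis \ref{prop:C2} requires the domain to carry an unconditional basis satisfying an upper $q_1$-estimate, and the arbitrary sub-basis $(\uu_i)_{i\in I_2}$ is precisely what you cannot assume has one. (When $q_1\le 1$ hypothesis (i) of Proposition~\ref{prop:1} would rescue you, but that excludes the theorem's main applications.) Passing from ``all operators $\YY\to\XX_1$ are compact'' to control of the cross-projections attached to an arbitrary unconditional basis of the direct sum is the entire content of Wojtaszczyk's theorem; it is a nontrivial result that should be cited, not re-derived in two sentences. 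With Theorem~\ref{Woj:Split} substituted for your core lemma, the remainder of your argument, including the UTAP deduction, goes through.
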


The proof of Theorem~\ref{thm:split} relies on our extension of Pitt's theorem from Section~\ref{sect:SpBanach} and the following three instrumental lemmas.

\begin{lemma}\label{lem:111}
Let $\XX$ and $\YY$ be quasi-Banach spaces with respective unconditional bases $\BB_x$ and $\BB_y$. Suppose that there is $0<q\le\infty$ such that $\BB_x$ satisfies a lower $q$-estimate and $\BB_y$ satisfies an upper $q$-estimate. Then every basic sequence $\BB$ in $\XX\oplus \YY$ disjointly supported with respect to $\BB_x\oplus\BB_y$ has a subsequence equivalent either to a basic sequence in $\XX$ disjointly supported with respect to $\BB_x$, or to a basic sequence in $\YY$ disjointly supported with respect to $\BB_y$.
\end{lemma}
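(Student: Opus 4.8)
The plan is to decompose each vector of $\BB$ into its $\XX$- and $\YY$-components and to show that, after passing to a subsequence, one of the two components governs the norm of every linear combination. Write $\BB=(\vv_n)_{n=1}^\infty$ and, using the canonical projections $P_\XX$ and $P_\YY$ associated with the direct sum, set $\xx_n=P_\XX(\vv_n)$ and $\yy_n=P_\YY(\vv_n)$, so that $\vv_n=\xx_n+\yy_n$. Since $\BB$ is disjointly supported with respect to $\BB_x\oplus\BB_y$, the sequence $(\xx_n)_n$ is disjointly supported with respect to $\BB_x$ and $(\yy_n)_n$ is disjointly supported with respect to $\BB_y$; in particular both are unconditional basic sequences whose constants are controlled by those of $\BB_x$ and $\BB_y$. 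I will also use repeatedly that, as $\BB_x\oplus\BB_y$ is a basis of the direct sum, $\Vert\sum_n\lambda_n\vv_n\Vert$ is equivalent to $\max(\Vert\sum_n\lambda_n\xx_n\Vert,\Vert\sum_n\lambda_n\yy_n\Vert)$.

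The two hypotheses enter through one-sided lattice inequalities for disjoint families. Since $\BB_x$ satisfies a lower $q$-estimate, for any scalars one has $(\sum_n|\lambda_n|^q\Vert\xx_n\Vert^q)^{1/q}\lesssim\Vert\sum_n\lambda_n\xx_n\Vert$, while the upper $q$-estimate for $\BB_y$ gives $\Vert\sum_n\lambda_n\yy_n\Vert\lesssim(\sum_n|\lambda_n|^q\Vert\yy_n\Vert^q)^{1/q}$ (with suprema replacing the $\ell_q$-expressions when $q=\infty$). Assuming as we may that $\BB$ is semi-normalized and passing to a subsequence, I may suppose $\Vert\xx_n\Vert\to\alpha$ and $\Vert\yy_n\Vert\to\beta$ with $\alpha,\beta$ finite and $\max(\alpha,\beta)>0$, which splits the argument into two cases.

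If $\alpha>0$, then $\Vert\yy_n\Vert\le M\Vert\xx_n\Vert$ for some constant $M$ and all large $n$, and chaining the two displayed inequalities yields $\Vert\sum_n\lambda_n\yy_n\Vert\lesssim M\Vert\sum_n\lambda_n\xx_n\Vert$. Combined with the equivalence of $\Vert\sum_n\lambda_n\vv_n\Vert$ to the maximum of the two component norms and with the trivial bound $\Vert\sum_n\lambda_n\xx_n\Vert\le\Vert P_\XX\Vert\,\Vert\sum_n\lambda_n\vv_n\Vert$, this shows that $(\vv_n)_n$ is equivalent to the semi-normalized disjoint sequence $(\xx_n)_n$ in $\XX$. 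If instead $\alpha=0$, so that $\Vert\xx_n\Vert\to0$ while $(\yy_n)_n$ stays semi-normalized, I pass to a further subsequence along which $\sum_n\Vert\xx_n\Vert^p$ is as small as desired (where the ambient space is $p$-Banach). Because $(\yy_n)_n$ is a semi-normalized basic sequence with basis constant controlled by the unconditional constant of $\BB_y$, the small perturbation principle applied to $\vv_n=\yy_n+\xx_n$ shows that $(\vv_n)_n$ is equivalent to $(\yy_n)_n$.

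The genuine difficulty is the asymmetry between the two cases. In the $\XX$-dominant case both lattice estimates cooperate to produce a clean two-sided equivalence, but in the $\YY$-dominant case there is no upper estimate available in $\XX$, and one therefore cannot bound $\Vert\sum_n\lambda_n\xx_n\Vert$ from above by a lattice expression; the way around this is to make the $\XX$-components negligible in norm by thinning the sequence and to discard them through a perturbation argument rather than a lattice inequality. Making that perturbation rigorous is where one must be careful to keep the basis constant of $(\yy_n)_n$ under control, which is exactly what disjointness with respect to the unconditional basis $\BB_y$ guarantees.
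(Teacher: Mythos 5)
Your proof is correct and follows essentially the same route as the paper's: the same dichotomy on whether the $\XX$-components stay bounded away from zero along a subsequence, with the lower/upper $q$-estimates chained through $\ell_q$ to show the $\XX$-part dominates the $\YY$-part in the first case, and the small perturbation principle to discard the vanishing $\XX$-part in the second.
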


\begin{proof}
Put $\BB=(\uu_n,\vv_n)_{n=1}^\infty$. Passing to a subsequence we can suppose that either $\inf_n \Vert \uu_n \Vert>0$ or $\Vert \uu_n \Vert\le \alpha_n$ for all $n\in\NN$, where $(\alpha_n)_{n=1}^\infty$ is a given sequence of positive scalars. In the former case, $\BB_u=(\uu_n)_{n=1}^\infty$ dominates the canonical basis of $\ell_{q}$ and so $\BB_u$ also dominates $\BB_v$. Consequently, $\BB$ is equivalent to $\BB_u$. In the latter case, the principle of small perturbations yields that $\BB$ is equivalent to $\BB_v$ for a suitable choice of $(\alpha_n)_{n=1}^\infty$.
\end{proof}

\begin{lemma}\label{lem:lpdirectsum}
Let $\XX$ and $\YY$ be quasi-Banach spaces with respective unconditional bases $\BB_x$ and $\BB_y$. Suppose that $\BB_x$ satisfies a lower $q$-estimate and that $\BB_y$ satisfies an upper $q$-estimate for some $0<q\le \infty$. Then, given $0<p<q$ (resp.\ $q<p\le \infty$), the space $\ell_p$ is isomorphic to a subspace of $\XX\oplus\YY$ if and only if $\ell_p$ is isomorphic to a subspace of $\XX$ (resp.\ of $\YY$), with the convention that we replace $\ell_\infty$ with $c_0$ if $p=\infty$.
\end{lemma}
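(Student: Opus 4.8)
The reverse implications are immediate: both $\XX$ and $\YY$ sit (complementably) inside $\XX\oplus\YY$, so any embedding of $\ell_p$ (or of $c_0$ when $p=\infty$) into one summand yields one into the direct sum. For the direct implications the plan is to manufacture, out of an abstract embedding into $\XX\oplus\YY$, a copy of $\ell_p$ that is well localized with respect to $\BB_x\oplus\BB_y$, and then let the two lattice estimates decide which summand must host it.

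Suppose then that $\ell_p$ (resp.\ $c_0$ when $p=\infty$) embeds into $\XX\oplus\YY$. Since $\BB_x\oplus\BB_y$ is an unconditional, hence Schauder, basis of $\XX\oplus\YY$, Remark~\ref{rmk:lpembeds} provides a block basic sequence with respect to $\BB_x\oplus\BB_y$ equivalent to the canonical $\ell_p$-basis; in particular it is disjointly supported with respect to $\BB_x\oplus\BB_y$. I would then apply Lemma~\ref{lem:111} to this sequence and pass to the subsequence it furnishes. That subsequence is still equivalent to the $\ell_p$-basis (a subsequence of the $\ell_p$-basis is again equivalent to it), and it is moreover equivalent either to a sequence in $\XX$ disjointly supported with respect to $\BB_x$, or to a sequence in $\YY$ disjointly supported with respect to $\BB_y$.

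The decisive observation is that the estimate of the ambient basis passes to disjointly supported sequences: a semi-normalized sequence disjointly supported with respect to $\BB_y$ satisfies an upper $q$-estimate and is therefore dominated by the canonical $\ell_q$-basis, whereas a semi-normalized sequence disjointly supported with respect to $\BB_x$ satisfies a lower $q$-estimate and hence dominates the canonical $\ell_q$-basis (equivalence to the $\ell_p$-basis guarantees the semi-normalization needed here). Now I would invoke the elementary comparison of the canonical bases of $\ell_p$ and $\ell_q$, read off by evaluating on the vectors $(1,\dots,1,0,\dots)$: for $p<q$ the $\ell_p$-basis dominates, but is not dominated by, the $\ell_q$-basis, while for $q<p\le\infty$ the $\ell_p$-basis (read as $c_0$ if $p=\infty$) is dominated by, but does not dominate, the $\ell_q$-basis. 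Consequently, when $0<p<q$ the extracted sequence cannot be equivalent to a disjointly supported sequence in $\YY$, so the $\XX$-alternative must occur and $\ell_p$ embeds into $\XX$; and when $q<p\le\infty$ it cannot be equivalent to a disjointly supported sequence in $\XX$, so $\ell_p$ (resp.\ $c_0$) embeds into $\YY$.

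The only point demanding genuine care is this last step, and it is where I expect the main obstacle to lie. One must verify both that disjoint support really does transfer the upper, resp.\ lower, $q$-estimate of $\BB_y$, resp.\ $\BB_x$, to the extracted sequence, and that the resulting domination by, resp.\ of, the $\ell_q$-basis is strictly incompatible with equivalence to the $\ell_p$-basis; the borderline bookkeeping of the case $p=\infty$, where $\ell_p$ is interpreted as $c_0$, should be checked in tandem so that no subcase slips through.
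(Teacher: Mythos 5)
Your proposal is correct and follows essentially the same route as the paper: extract a disjointly supported copy of the $\ell_p$-basis via Remark~\ref{rmk:lpembeds}, localize it to one summand with Lemma~\ref{lem:111}, and rule out the wrong summand by comparing the lower (resp.\ upper) $q$-estimate inherited by disjointly supported semi-normalized sequences with the canonical $\ell_p$-basis. The final step you flag as delicate is exactly the paper's one-line conclusion that domination forces $p\le q$ or $p\ge q$.
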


\begin{proof}
Suppose that $\ell_p$ is isomorphic to a subspace of $\XX\oplus\YY$. Then there is a basic sequence in $\XX\oplus\YY$ disjointly supported with respect to $\BB_x\oplus\BB_y$ and equivalent to the canonical basis of $\ell_{p}$ (see Remark~\ref{rmk:lpembeds}). By Lemma~\ref{lem:111}, there is a sequence $\BB$ disjointly supported with respect to either $\BB_x$ or $\BB_y$ and equivalent to the canonical basis of $\ell_{p}$. In the former case, the canonical basis of $\ell_{p}$ dominates the canonical basis of $\ell_{q}$ so that $p\le q$. In the latter case the canonical basis of $\ell_{p}$ is dominated by the canonical basis of $\ell_{q}$ and, hence $p\ge q$. Summing up, if $p<q$ (resp.\ $p>q$), $\BB$ is a basic sequence in $\XX$ (resp.\ in $\YY$) disjointly supported with respect to $\BB_x$ (resp.\ $\BB_y$) and so $[\BB]$ is a subspace of $\XX$ (resp.\ $\YY$) isomorphic to $\ell_p$.
\end{proof}

\begin{lemma}\label{lem:SUBfromcouples}
Let $(\XX_i)_{i\in I}$ be a finite family of quasi-Banach spaces.

\begin{enumerate}[label=(\roman*), leftmargin=*, widest=ii]
\item\label{lem:SUB:1}Suppose that there is a partition $(I_j)_{j\in J}$ of $I$ such that $(\XX_j)_{i\in I_j}$ is splitting for unconditional bases for all $j\in J$ and $(\bigoplus_{i\in I_j} \XX_i)_{j\in J}$ is splitting for unconditional bases. Then $(\XX_i)_{i\in I}$ is splitting for unconditional bases.

\item\label{lem:SUB:2} Suppose there is a bijection $\pi\colon \{1,\dots,|I|\}\to I$ such
for each $2\le s \le |I|$, the pair $(\bigoplus_{j=1}^{s-1} \XX_{\pi(j)} ,\XX_{\pi(s)})$ is splitting for unconditional bases. Then $(\XX_j)_{j\in I}$ is splitting for unconditional bases.
\end{enumerate}
\end{lemma}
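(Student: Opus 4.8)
The plan is to reduce both parts of Lemma~\ref{lem:SUBfromcouples} to the defining property of ``splitting for unconditional bases''. That definition says a family $(\XX_i)_{i\in I}$ is splitting when every unconditional basis of $\bigoplus_{i\in I}\XX_i$ can be partitioned into blocks $(\BB_i)_{i\in I}$ with $[\BB_i]\simeq\XX_i$. So in each part I fix an arbitrary unconditional basis $\BB$ of the relevant direct sum and manufacture the required splitting by applying the hypotheses in a structured order. The key observation throughout is that $\bigoplus_{i\in I}\XX_i$ is canonically isomorphic to $\bigoplus_{j\in J}\bigl(\bigoplus_{i\in I_j}\XX_i\bigr)$ in part~\ref{lem:SUB:1}, and that one may re-associate the summands freely.

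For part~\ref{lem:SUB:1}, first I would view $\bigoplus_{i\in I}\XX_i$ as the coarser direct sum $\bigoplus_{j\in J}\ZZ_j$, where $\ZZ_j:=\bigoplus_{i\in I_j}\XX_i$. Given an unconditional basis $\BB$ of this space, the hypothesis that $(\ZZ_j)_{j\in J}$ is splitting for unconditional bases produces a partition of $\BB$ into blocks $(\BB^{(j)})_{j\in J}$ with $[\BB^{(j)}]\simeq\ZZ_j$ for each $j$. The next step is that each $\BB^{(j)}$ is itself an unconditional basis of a space isomorphic to $\ZZ_j=\bigoplus_{i\in I_j}\XX_i$; since $(\XX_i)_{i\in I_j}$ is splitting for unconditional bases by hypothesis, I may partition $\BB^{(j)}$ further into blocks $(\BB_i)_{i\in I_j}$ with $[\BB_i]\simeq\XX_i$. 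Collecting these refinements over all $j\in J$ yields a partition of the original $\BB$ into blocks $(\BB_i)_{i\in I}$ with $[\BB_i]\simeq\XX_i$, which is exactly what is required.

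For part~\ref{lem:SUB:2}, I would argue by a finite downward induction on $s$, peeling off one summand at a time. Set $\YY_s:=\bigoplus_{j=1}^s\XX_{\pi(j)}$, so that $\YY_{|I|}=\bigoplus_{i\in I}\XX_i$ and $\YY_s=\YY_{s-1}\oplus\XX_{\pi(s)}$. Starting from an arbitrary unconditional basis $\BB$ of $\YY_{|I|}$, the hypothesis that the pair $(\YY_{|I|-1},\XX_{\pi(|I|)})$ is splitting produces a partition of $\BB$ into two blocks, one spanning a copy of $\YY_{|I|-1}$ and one spanning a copy of $\XX_{\pi(|I|)}$. The block spanning $\YY_{|I|-1}$ is again an unconditional basis of that space, so applying the splitting hypothesis for the pair $(\YY_{|I|-2},\XX_{\pi(|I|-1)})$ splits off $\XX_{\pi(|I|-1)}$, and I continue inductively down to $s=2$. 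At the bottom the remaining block spans $\YY_1=\XX_{\pi(1)}$, so the accumulated splitting exhibits $\BB$ as partitioned into blocks spanning copies of each $\XX_{\pi(s)}$, establishing that $(\XX_i)_{i\in I}$ is splitting. (Indeed, part~\ref{lem:SUB:2} is formally a special case of part~\ref{lem:SUB:1}, using the nested partition $I_j=\{\pi(1),\dots,\pi(j)\}$ stripped one index at a time, but the inductive phrasing is cleaner to write down.)

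The only point demanding genuine care is the bookkeeping that a block of an unconditional basis which spans a space isomorphic to a direct sum is again a bona fide unconditional basis of that space, so that the splitting hypotheses may be reapplied at the next stage; this is where one invokes that $[\BB^{(j)}]\simeq\ZZ_j$ transports the unconditional basis structure correctly. I do not anticipate any analytic obstacle here—the content of the lemma is entirely combinatorial, transferring the splitting property across nested partitions—so the main task is simply to phrase the iteration so that the partitions at different levels assemble into a single partition of the original basis.
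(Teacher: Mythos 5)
Your argument is correct and is precisely the routine refinement/induction the authors have in mind — the paper itself dismisses the proof as ``It is straightforward.'' The one point needing care, that a block of the basis spanning a copy of a direct sum transports (via the isomorphism) to an unconditional basis of that direct sum so the splitting hypothesis can be reapplied, is exactly the point you flag and handle.
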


\begin{proof}It is straightforward.\end{proof}

The last ingredient we need before we tackle the proof of Theorem~\ref{thm:split} is the following result by Wojtaszczyk.
\begin{theorem}[see \cite{Woj1978}*{Theorem 2.1}]\label{Woj:Split}
Let $\XX$ and $\YY$ be Banach spaces such that every bounded linear operator from $\XX$ into $\YY$ is compact. Then $(\XX,\YY)$ is splitting for unconditional bases.
\end{theorem}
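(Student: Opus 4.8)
The plan is to work with the canonical projections and to reduce the splitting to a two‑colouring of the index set of the given basis. Write $Z=\XX\oplus\YY$ and let $P\colon Z\to Z$ be the idempotent with range $\XX$ and kernel $\YY$, and $Q=\Id-P$ the complementary projection, so $PQ=QP=0$. Let $\BB=(\ee_n)_n$ be an arbitrary semi-normalized unconditional basis of $Z$, with biorthogonal functionals $(\ee_n^*)_n$; for $\sigma\subseteq\NN$ put $Z_\sigma=\overline{\spn}\{\ee_n:n\in\sigma\}$ and let $P_\sigma$ be the associated coordinate projection. The hypothesis enters only through the following reformulation: since every bounded operator $\XX\to\YY$ is compact, for every bounded $T\colon Z\to Z$ the operator $QTP$ factors through an operator from $\XX$ into $\YY$ and is therefore compact; that is, $\mathcal{L}(\XX,\YY)=\mathcal{K}(\XX,\YY)$. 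In particular $QP_\sigma P$ is compact for every $\sigma$.

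The backbone of the argument is the identity $P|_{Z_A}=\iota_A-Q|_{Z_A}$, where $\iota_A\colon Z_A\hookrightarrow Z$ is the isometric inclusion (here I read all three operators as maps into $Z$, using $Pw=w-Qw$ for $w\in Z_A$). Thus, \emph{if} we can produce a partition $\NN=A\sqcup B$ for which both $Q|_{Z_A}$ and $P|_{Z_B}$ are compact, then $P|_{Z_A}$ is a compact perturbation of the isometric embedding $\iota_A$, hence upper semi-Fredholm: it has finite-dimensional kernel and closed range $\XX_0\subseteq\XX$. To see that $\XX_0$ is finite-codimensional, note $\XX=P(Z)=P(Z_A)+P(Z_B)$, so the quotient map $\XX\to\XX/\XX_0$ composed with the compact operator $P|_{Z_B}$ is a compact surjection onto a Banach space, which by the open mapping theorem and Riesz's lemma forces that quotient to be finite-dimensional. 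Consequently $Z_A$ and $\XX$ differ only by finite-dimensional pieces, and symmetrically $Z_B$ and $\YY$ do; a routine transfer of finitely many basis vectors between $A$ and $B$ then upgrades these to isomorphisms $Z_A\simeq\XX$ and $Z_B\simeq\YY$, which is precisely the assertion that $(\XX,\YY)$ is splitting for unconditional bases.

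The crux of the proof—and the step I expect to be the main obstacle—is the construction of the partition $A\sqcup B$ with $Q|_{Z_A}$ and $P|_{Z_B}$ both compact. The idea is to colour the indices by a gliding-hump bookkeeping along the lines of Lemma~\ref{lem:compactBBS}: processing the basis vectors in blocks and using compactness of the corner operators $QP_{(m,\infty)}P$, one locates successive finite blocks on which the relevant cross-contributions are smaller than a prescribed $\varepsilon_k$, and assigns each index to $A$ or $B$ according to whether $\ee_n$ is, up to $\varepsilon_k$, essentially in $\YY$ or essentially in $\XX$ on that block, the ambiguity being absorbed by the small perturbation principle. Choosing $\sum_k\varepsilon_k<\infty$ realizes $Q|_{Z_A}$ as a norm-convergent series of finite-rank operators modulo a small perturbation, hence compact. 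The genuinely delicate point is that compactness is required on \emph{both} pieces while the hypothesis is one-directional; this is exactly why mere total incomparability of $\XX$ and $\YY$ does not suffice and the operator-ideal equality is essential. It is resolved by symmetrizing in the dual: by Schauder's theorem the adjoint pair satisfies $\mathcal{L}(\YY^*,\XX^*)=\mathcal{K}(\YY^*,\XX^*)$, and $(\ee_n^*)_n$ is an unconditional basic sequence in $Z^*$, so the same bookkeeping carried out for $(\YY^*,\XX^*)$ controls $P|_{Z_B}$.

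Finally, the finite-dimensional discrepancy between $\XX_0,\YY_0$ and $\XX,\YY$ is absorbed by the adjustment alluded to above: since $\XX_0$ is closed, complemented and of finite codimension in $\XX$, one enlarges $A$ by the finitely many indices whose $P$-images complete a basis of $\XX/\XX_0$ (perturbing to keep $P|_{Z_A}$ an embedding), and symmetrically rebalances $B$ so that $Q|_{Z_B}$ becomes onto $\YY$; the two discrepancies being finite, this simultaneous finite modification of the colouring terminates and yields $Z_A\simeq\XX$ and $Z_B\simeq\YY$. The two substantive inputs are therefore the operator-ideal equality—used for the pair and, via adjoints, for its dual—and the gliding-hump construction of the colouring; everything else is upper semi-Fredholm bookkeeping.
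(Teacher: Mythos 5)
Your Fredholm endgame is fine, but the crux of your plan---producing a partition $\NN=A\sqcup B$ for which \emph{both} $Q|_{Z_A}$ and $P|_{Z_B}$ are compact---asks for strictly more than the theorem asserts, and such a partition simply does not exist in general. Concretely, take $\XX=\ell_2$ and $\YY=\ell_1$, so the hypothesis holds by Pitt's theorem, and let $T\colon\ell_1\to\ell_2$ be the formal inclusion (bounded, non-compact, with $T(f_n)=u_n$ the unit vectors of $\ell_2$). The map $U(x,y)=(x+Ty,y)$ is an automorphism of $Z=\ell_2\oplus\ell_1$, so the vectors $(g_n,0)$ together with $\ee_n'=(u_n,f_n)$ form a semi-normalized unconditional basis of $Z$. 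Now let $\NN=A'\sqcup B'$ be \emph{any} partition. Infinitely many of the vectors $\ee_n'$ land in $Z_{A'}$ or in $Z_{B'}$; in the first case $Q(\ee_n')=(0,f_n)$ gives a bounded sequence in $Z_{A'}$ whose $Q$-images are $2$-separated, so $Q|_{Z_{A'}}$ is not compact, and in the second case $P(\ee_n')=(u_n,0)$ gives $\sqrt2$-separated images, so $P|_{Z_{B'}}$ is not compact. Yet this basis does split: with the obvious partition, $Z_B$ is the graph of $T$ and hence isomorphic to $\ell_1$ via $Q|_{Z_B}$. The lesson is that on the $\YY$-side the $P$-component of the basis vectors need not be compactly small; it need only be dominated by (factor through) the $Q$-component, which is why a correct proof must stay asymmetric, in accordance with the one-directional hypothesis.

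Your proposed repair by dualizing is also invalid: Schauder's theorem shows that adjoints of operators $\XX\to\YY$ are compact, but not every bounded operator $\YY^*\to\XX^*$ is weak$^*$-continuous, i.e.\ an adjoint, so the equality $\mathcal{L}(\XX,\YY)=\mathcal{K}(\XX,\YY)$ does not pass to $\mathcal{L}(\YY^*,\XX^*)=\mathcal{K}(\YY^*,\XX^*)$---and in any case no bookkeeping, primal or dual, can build a partition that the example above rules out. For the record, the paper does not prove this statement; it quotes it from \cite{Woj1978}*{Theorem 2.1}, where the argument chooses the partition according to the size of the diagonal coefficients $\ee_n^*(P\ee_n)$, uses the compactness hypothesis only on the $\XX$-to-$\YY$ corner operators (your observation that $QTP$ is always compact is the right reformulation and is where the hypothesis enters there too), handles the $B$-side as a graph-type perturbation over $\YY$ rather than through compactness of $P|_{Z_B}$, and then concludes with a Fredholm/finite-transfer argument of exactly the kind you describe, the index balance coming from the identity $PP_A+QP_B=\Id-(PP_B+QP_A)$ once the relevant corners are controlled.
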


\begin{proof}[Completion of the Proof of Theorem~\ref{thm:split}]
Assume without loss of generality that $\BB_j$ is semi-normalized for all $1\le j \le n$. Let $(q_j)_{j=1}^{n-1}$ be as in Definition~\ref{def:IC}. Before tackling the general case, we will consider a couple of particular cases.

\textsc{Case A.} If $n=2$, the result follows by combining Corollary~\ref{cor:CCC} with Theorem~\ref{Woj:Split}.

\textsc{Case B.} Suppose that $n\ge 3$ and $q_j<q_{j+1}$ for all $1\le j\le n-2$. Since $\bigoplus_{j=1}^{s}\BB_j$ satisfies a lower $q_{s}$-estimate for all $1\le s\le n-1$, combining Lemma~\ref{lem:lpdirectsum} with  Case A we obtain that $(\bigoplus_{j=1}^{s-1}\XX_j,\XX_{s})$ is splitting for unconditional bases for all $2\le s\le n$. Applying Lemma~\ref{lem:SUBfromcouples}~\ref{lem:SUB:2} yields the desired result.

In the general case, we pick an arbitrary element $\alpha$ not belonging to $[1,\infty]$, and we use the convention that $q<\alpha$ for all $q\in[1,\infty]$. Let $A:=\{q_j\colon 1\le j \le n-1\}\cup\{\alpha\}$. For each $q\in A\setminus\{\alpha\}$ set $J_q=\{j\colon q_j=q\}$, $\YY_q=\bigoplus_{j\in J_q} \XX_j$ and $\BB'_q=\bigoplus_{j\in J_q} \BB_j$. Set also $\YY_\alpha=\XX_n$ and $\BB'_\alpha=\BB_n$. If $q$, $r\in A$ are such that $q<r$, then $\BB_q'$ satisfies a lower $q$-estimate and $\BB'_r$ satisfies and upper $q$-estimate. Moreover, by Lemma~\ref{lem:lpdirectsum}, either $\YY_q$ or $\YY_r$ contains no copy of $\ell_q$. Then, Case B yields that $(\YY_q)_{q\in A}$ is splitting for unconditional bases. Combining Remark~\ref{rmk:stopping} with Case A yields that $(\XX_j)_{j\in J_q}$ is splitting for unconditional bases. We conclude the proof by applying Lemma~\ref{lem:SUBfromcouples}~\ref{lem:SUB:1}.
\end{proof}

\section{Applications}

\noindent We close with some applications of Theorem~\ref{thm:split} to the classical theory of Banach and quasi-Banach spaces.

Given $0<r<\infty$ and $1<s\le\infty$, we will denote by $\Ts^{(r)}$ the $r$-convexified Tsirelson space and by $\Ts^{(s)}_*$ the dual space of $\Ts^{(r)}$, where $r=s/(s-1)$. It is known that  for $0<r<\infty$ the space $\Ts^{(r)}$ contains no subsymmetric basic sequence, i.e., no unconditional basic sequence equivalent to all its subsequences. With the help of
our next lemma we will be able to extend this result to dual spaces.

\begin{lemma}\label{lem:AnsoSS}
Let $\XX$ be a Banach space with an unconditional basis $\BB=(\xx_j)_{j=1}^\infty$. Let $\YY$ be the subspace of $\XX^*$ spanned by the biorthogonal functionals $\BB^*=(\xx_j^*)_{j\in\Nt}$ of $\BB$. Suppose that there are a constant $C$ and an increasing sequence $(j_n)_{n=1}^\infty$ in $\NN$ with the following property:
$
\left\Vert \sum_{n=1}^\infty f_n\right\Vert \le C \left\Vert \sum_{n=1}^\infty g_n\right\Vert
$
whenever $(f_n)_{n=1}^\infty$ and $(g_n)_{n=1}^\infty$ in $\XX$ and $(k_n)_{n=1}^\infty$ in $\NN$ satisfy $\Vert f_n\Vert \le \Vert g_n\Vert$, $\supp(f_n)\cup\supp(g_n) \subseteq[k_n, k_{n+1}-1]$, and $j_n\le k_n$ for all $n\in\NN$.
Then, any subsymmetric basis sequence in $\XX$ (resp.\ $\YY$) is equivalent to $(\xx_{k_n})_{n=1}^\infty$ (resp.\ $(\xx^*_{k_n})_{n=1}^\infty$) for some increasing sequence $(k_n)_{n=1}^\infty$ in $\NN$ with $j_n\le k_n$ for all $n\in\NN$.
\end{lemma}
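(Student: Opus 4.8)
The plan is to prove the two assertions in parallel: I would establish the statement for $\XX$ directly from the displayed domination hypothesis, and then obtain the statement for $\YY$ by first \emph{transferring} that hypothesis to the biorthogonal system $\BB^*$ and afterwards repeating the $\XX$-argument verbatim inside $\YY$.

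\emph{Reduction to block bases.} Let $(\vv_n)_{n=1}^\infty$ be a subsymmetric, hence semi-normalized and unconditional, basic sequence in $\XX$. Using Cantor's diagonal argument as in the proof of Lemma~\ref{lem:compactBBS}, I pass to a subsequence along which $\lim_n \xx_k^*(\vv_n)$ exists for every $k\in\NN$; by subsymmetry this subsequence is equivalent to $(\vv_n)$, so I relabel it $(\vv_n)$. Setting $\uu_n=\vv_{2n-1}-\vv_{2n}$, the differences are coordinate-wise null, i.e.\ $\lim_n \xx_k^*(\uu_n)=0$ for all $k$. A routine computation, bounding $\Vert\sum_n\lambda_n\vv_{2n-1}\Vert$ and $\Vert\sum_n\lambda_n\vv_{2n}\Vert$ through subsymmetry and recovering $\Vert\sum_n\lambda_n\vv_{2n-1}\Vert$ from $\Vert\sum_n\lambda_n\uu_n\Vert$ via the suppression unconditionality of $(\vv_n)$, shows that $(\uu_n)$ is semi-normalized and equivalent to $(\vv_n)$, hence itself subsymmetric. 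Being coordinate-wise null, $(\uu_n)$ admits, by the gliding hump technique and the small perturbation principle, a subsequence equivalent to a normalized block basic sequence $\BB_w=(\bm{w}_n)_n$ with respect to $\BB$; by subsymmetry $\BB_w$ is equivalent to $(\vv_n)$.

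\emph{Core comparison.} Write $k_n=\min\supp(\bm{w}_n)$. Since the supports of a block basis tend to infinity and $(j_n)$ is a fixed sequence, I pass to a further subsequence (still equivalent to $(\vv_n)$) so that $j_n\le k_n$ for all $n$ and so that the successive intervals $I_n=[k_n,k_{n+1}-1]$ contain both $\supp(\bm{w}_n)$ and the single coordinate $k_n$. Now I apply the hypothesis twice, with the roles of the two block families interchanged: feeding $f_n=\lambda_n\xx_{k_n}$ against $g_n=\lambda_n\Vert\xx_{k_n}\Vert\,\bm{w}_n$ (and then the reverse) produces equal-norm instances of the displayed inequality, which after absorbing the bounded semi-normalization constants $\Vert\xx_{k_n}\Vert$ of $\BB$ give
\[
\frac{1}{C'}\Big\Vert\sum_n\lambda_n\bm{w}_n\Big\Vert\le\Big\Vert\sum_n\lambda_n\xx_{k_n}\Big\Vert\le C'\Big\Vert\sum_n\lambda_n\bm{w}_n\Big\Vert .
\]
Thus $\BB_w$ is equivalent to $(\xx_{k_n})_n$, and chaining the equivalences settles the $\XX$-assertion.

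\emph{The dual assertion.} The key claim is that $\BB^*=(\xx_j^*)$ satisfies in $\YY$ a domination inequality of exactly the same shape, with the constant $C$ inflated by the suppression constant $K$ of $\BB$. Given blocks $\phi_n,\psi_n$ of $\BB^*$ supported in $I_n=[k_n,k_{n+1}-1]$ with $j_n\le k_n$ and $\Vert\phi_n\Vert\le\Vert\psi_n\Vert$, I would norm $\sum_n\phi_n$ by some $x$ with $\Vert x\Vert\le1$, set $a_n=\Vert P_{I_n}x\Vert$, choose vectors $y_n$ supported in $I_n$ with $\psi_n(y_n)\approx\Vert\psi_n\Vert$ and $\Vert y_n\Vert\le K$, and then feed $f_n=a_n y_n$ against $g_n=K\,P_{I_n}x$ into the $\XX$-hypothesis to obtain $\Vert\sum_n a_n y_n\Vert\le C K^2$; pairing $\sum_n\psi_n$ with $\sum_n a_n y_n$ and using $\Vert\phi_n\Vert\le\Vert\psi_n\Vert$ then bounds $|\langle\sum_n\phi_n,x\rangle|$ by $CK^2\Vert\sum_n\psi_n\Vert$. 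With this transferred hypothesis in hand, the reduction and the core comparison above run word for word for a subsymmetric basic sequence in $\YY$, yielding equivalence to $(\xx^*_{k_n})_n$. I expect the transfer of the domination inequality from $\XX$ to $\YY$ to be the main obstacle, since—unlike the $\XX$-part—it is not a direct invocation of the assumption but a genuine duality argument requiring norming vectors supported on the prescribed intervals together with careful bookkeeping of the suppression constant; the block-basis reduction is standard once the differences trick supplies coordinate-wise nullity, and the comparison itself is immediate from the equal-norm cases of the inequality.
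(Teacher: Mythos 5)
Your proof is correct and follows essentially the same two-step route as the paper's: the paper reduces a subsymmetric sequence in $\XX$ (resp.\ $\YY$) to an equivalent block basic sequence by citing \cite{AADK2019}*{Proposition 2.2} and then invokes \cite{AAW2020b}*{Lemma 3.9} for the comparison with $(\xx_{k_n})_{n}$ (resp.\ $(\xx^*_{k_n})_{n}$), whereas you reconstruct both ingredients by hand via the differences-plus-gliding-hump reduction, the two equal-norm applications of the domination hypothesis, and the duality transfer of that hypothesis to $\BB^*$. The extra work is sound and simply makes explicit what the paper delegates to its references.
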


\begin{proof}
Let $\BB_s$ be a subsymmetric sequence in $\XX$ (resp.\ in $\YY$). By \cite{AADK2019}*{Proposition 2.2}, there is a block basic sequence $(\yy_n)_{n=1}^\infty$ with respect to $\BB$ (resp.\ $\BB^*$) equivalent to $\BB_s$. Passing to a subsequence we can suppose that $k_n:=\min(\supp(\yy_n))\ge j_n$ for all $n\in\NN$. Then, by \cite{AAW2020b}*{Lemma 3.9}, $\BB_s$ is equivalent to $(\xx_{k_n})_{n=1}^\infty$ (resp.\ $(\xx^*_{k_n})_{n=1}^\infty$).
\end{proof}

\begin{proposition}\label{prop:SSTs}Let $0<r<\infty$ and $1<s\le\infty$. Then neither $\Ts^{(r)}$ nor $\Ts^{(s)}_*$ contain a subsymmetric basic sequence.
\end{proposition}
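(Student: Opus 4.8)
My plan is to deduce both non-existence statements from Lemma~\ref{lem:AnsoSS}. The assertion for $\Ts^{(r)}$ with $0<r<\infty$ is exactly the fact recorded just before that lemma, namely that $\Ts^{(r)}$ contains no subsymmetric basic sequence, so nothing new is required there. For the dual spaces I set $r=s/(s-1)$, so that $1<s\le\infty$ forces $1\le r<\infty$; then $\Ts^{(r)}$ is a reflexive Banach space whose canonical unconditional basis $\BB=(\xx_j)_{j=1}^\infty$ is shrinking, and hence $\Ts^{(s)}_*=[\BB^*]$ is the closed span of the biorthogonal functionals. This is precisely the situation $\YY=[\BB^*]$ of Lemma~\ref{lem:AnsoSS} with $\XX=\Ts^{(r)}$, so the only thing I really have to check is that $\Ts^{(r)}$ satisfies the blocking hypothesis of that lemma: that there are a constant $C$ and an increasing sequence $(j_n)_{n=1}^\infty$ with $\|\sum_n f_n\|\le C\|\sum_n g_n\|$ whenever $\|f_n\|\le\|g_n\|$, $\supp(f_n)\cup\supp(g_n)\subseteq[k_n,k_{n+1}-1]$, and $j_n\le k_n$ for all $n$.

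To verify this I would reduce to the unconvexified Tsirelson space $\Ts=\Ts^{(1)}$. Since the $f_n$ are pairwise disjointly supported (and likewise the $g_n$), one has $|\sum_n f_n|^r=\sum_n|f_n|^r$ coordinatewise, and the defining relation $\|x\|_{\Ts^{(r)}}=\| \, |x|^r \, \|_{\Ts}^{1/r}$ turns the required inequality into the same inequality for $\Ts$ applied to the disjoint vectors $|f_n|^r$ and $|g_n|^r$, with $C$ replaced by $C^{1/r}$. What remains is therefore the corresponding statement for $\Ts$ itself, which is the standard right-dominance (blocking) property of the Tsirelson norm: because admissibility restricts the number of allowed blocks only through the minimum of their supports, once the blocks lie beyond a sufficiently fast threshold $(j_n)$ the norm of $\sum_n f_n$ is controlled, up to a universal constant, by the individual block norms $\|f_n\|$, and hence by $\|\sum_n g_n\|$. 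I expect assembling this combinatorial fact about $\Ts$ to be the main obstacle, since it is the only place where the fine structure of Tsirelson's space enters.

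Granting the hypothesis, Lemma~\ref{lem:AnsoSS} shows that any subsymmetric basic sequence in $\Ts^{(s)}_*$ is equivalent to a subsequence $(\xx^*_{k_n})_{n=1}^\infty$ of the dual basis. I would then finish by duality: since $\BB$ is an unconditional basis of the reflexive space $\Ts^{(r)}$, the coordinate projection onto $\{k_n\}_n$ is bounded, so $[\xx_{k_n}]$ is complemented and its dual is naturally identified with $[\xx^*_{k_n}]$, with $(\xx_{k_n})$ and $(\xx^*_{k_n})$ mutually biorthogonal. Subsymmetry is preserved under this duality (using that $\BB$, and hence $(\xx_{k_n})$, is shrinking), whence subsymmetry of $(\xx^*_{k_n})$ would force $(\xx_{k_n})$ to be a subsymmetric basic sequence in $\Ts^{(r)}$. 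This contradicts the known absence of subsymmetric basic sequences in $\Ts^{(r)}$, completing the proof for $\Ts^{(s)}_*$ and thus for both families.
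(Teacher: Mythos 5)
Your proposal is correct and follows essentially the same route as the paper: both apply Lemma~\ref{lem:AnsoSS} to reduce any subsymmetric basic sequence in $\Ts^{(s)}_*$ (resp.\ $\Ts^{(r)}$) to a subsequence of the (dual) unit vector basis and then conclude by duality from the fact that no subsequence of the Tsirelson basis is subsymmetric. The blocking property of $\Ts$ that you flag as the main remaining obstacle is precisely what the paper also takes as known, citing \cite{CasShura1989}*{Corollary II.5} together with the observation that $p$-convexification inherits it, which matches your disjoint-support reduction from $\Ts^{(r)}$ to $\Ts$.
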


\begin{proof}It is known that the unit vector system $(\tb_j)_{j=1}^\infty$ of $\Ts$ satisfies the assumptions in Theorem~\ref{lem:AnsoSS} for any $(j_n)_{n=1}^\infty$ (see \cite{CasShura1989}*{Corollary II.5}). And so does the unit vector system $(\tb_j^{(p)})_{j=1}^\infty$ of $\Ts^{(p)}$ for $0<p<\infty$ since $p$-convexifications inherit this property. Denote by $(\xx_j)_{j=1}^\infty$ the unit vector system of $\Ts^{(s)}_*$ (resp.\ $\Ts^{(r)}$). Assume by contradiction that $\Ts^{(s)}_*$ (resp.\ $\Ts^{(r)}$) has a subsymmetric sequence. By Lemma~\ref{lem:AnsoSS} there is an increasing sequence $(j_n)_{n=1}^\infty$ such that $(\xx_{j_n})_{j=1}^\infty$ is subsymmetric. By duality, $(\tb_{j_n}^{(p)})_{n=1}^\infty$ is subsymmetric as well, where $p=s/(s-1)$ (resp.\ $p=r$). Hence, $(\tb_{j_n})_{n=1}^\infty$ is subsymmetric. But no subbasis of $(\tb_j)_{j=1}^\infty$ is subsymmetric.
\end{proof}

The next two theorems follow easily by combining Theorem~\ref{thm:split} with Proposition~\ref{prop:SSTs}.
\begin{theorem}
Let $A\subseteq[1,\infty]$, $B\subseteq[1,\infty)$, and $D\subseteq (1,\infty]$ be finite sets. Then $(\ell_p)_{p\in A} \sqcup (\Ts^{(r)})_{r\in B} \sqcup (\Ts_*^{(s)})_{s\in D}$ is splitting for unconditional basis.
\end{theorem}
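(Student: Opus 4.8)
The plan is to reduce the statement to Theorem~\ref{thm:split}: I will order the family so that the corresponding unit vector bases become lattice-estimate disjoint in the sense of Definition~\ref{def:IC}, and then invoke the splitting criterion (all the spaces involved are Banach spaces, so the theorem applies). The lattice estimates I intend to use are the following. The unit vector basis of $\ell_p$ satisfies a lower $q$-estimate for every $q\ge p$ and an upper $q$-estimate for every $q\le p$. The unit vector basis of $\Ts^{(r)}$ satisfies an upper $r$-estimate (automatic for an $r$-convexification) and, because $\Ts$ satisfies a lower $t$-estimate for every $t>1$ (see \cite{CasShura1989}), a lower $q$-estimate for every $q>r$. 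Dually, the unit vector basis of $\Ts_*^{(s)}$ satisfies a lower $s$-estimate and an upper $q$-estimate for every $q<s$. Finally, by Proposition~\ref{prop:SSTs} neither $\Ts^{(r)}$ nor $\Ts_*^{(s)}$ contains an isomorphic copy of any $\ell_q$ (nor of $c_0$), since the unit vector basis of $\ell_q$ is subsymmetric; this is exactly what will certify condition (iii) of Definition~\ref{def:IC} at the critical exponents.

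Next I would linearly order the spaces by assigning to each a position in $[1,\infty]\times\{0,1,2\}$, ordered lexicographically, via $\Ts_*^{(s)}\mapsto(s,0)$, $\ell_p\mapsto(p,1)$, and $\Ts^{(r)}\mapsto(r,2)$. As $A$, $B$, $D$ are sets, distinct spaces receive distinct positions, so the order is strict; I relabel the spaces as $\XX_1,\dots,\XX_n$ with increasing positions and write $e_j$ for the first coordinate (the ``exponent'') of $\XX_j$, so that $e_1\le\cdots\le e_n$. The tie-break $0<1<2$ is forced by the estimates above: at a common exponent $e$, the earlier space (which must supply a lower $e$-estimate) can only be $\Ts_*^{(e)}$ or $\ell_e$, while the later one (which must supply an upper $e$-estimate) can only be $\ell_e$ or $\Ts^{(e)}$.

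With this ordering I would verify Definition~\ref{def:IC} by choosing, for each consecutive pair $(\XX_j,\XX_{j+1})$, an exponent $q_j$ as follows. If $e_j=e_{j+1}=:e$, set $q_j=e$: the earlier space has a lower $e$-estimate, the later space has an upper $e$-estimate, and at least one of the two is of Tsirelson type and hence contains no copy of $\ell_e$ by Proposition~\ref{prop:SSTs}. If $e_j<e_{j+1}$, pick any $q_j\in(e_j,e_{j+1})$: then $\XX_j$ has a lower $q_j$-estimate (as $q_j>e_j$) and $\XX_{j+1}$ has an upper $q_j$-estimate (as $q_j<e_{j+1}$), while $\XX_j$ contains no copy of $\ell_{q_j}$ (a Tsirelson-type space contains none, and $\ell_{e_j}$ contains $\ell_{q_j}$ only if $q_j=e_j$, which is excluded). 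In either case $e_j\le q_j\le e_{j+1}$, so $q_j\le e_{j+1}\le q_{j+1}$ and the sequence $(q_j)_{j=1}^{n-1}$ is non-decreasing. Hence $(\XX_j)_{j=1}^n$ is lattice-estimate disjoint, and Theorem~\ref{thm:split} yields that it is splitting for unconditional bases.

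The bookkeeping of the tie-breaks and the monotonicity check are routine. The genuine inputs beyond Theorem~\ref{thm:split} are the precise two-sided lattice behaviour of the Tsirelson spaces --- in particular that $\Ts^{(r)}$ enjoys a lower $q$-estimate for every $q>r$ (and dually $\Ts_*^{(s)}$ an upper $q$-estimate for every $q<s$), which is what permits a Tsirelson space to sit in the \emph{middle} of the chain rather than only at an end --- together with the absence of subsymmetric basic sequences from Proposition~\ref{prop:SSTs}, which is precisely the ingredient supplying condition (iii) whenever two summands share a critical exponent. I expect the only real care to be in confirming these estimates and in checking that, at a triply occupied exponent $\{\Ts_*^{(e)},\ell_e,\Ts^{(e)}\}$, the two resulting equal values $q_j=q_{j+1}=e$ are admissible, which is consistent with Remark~\ref{rmk:stopping}.
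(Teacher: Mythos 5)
Your proof is correct and follows exactly the route the paper intends: the authors give no details beyond saying the theorem ``follows easily by combining Theorem~\ref{thm:split} with Proposition~\ref{prop:SSTs}'', and your argument is precisely that combination, with the ordering, the two-sided lattice estimates of $\Ts^{(r)}$ and $\Ts^{(s)}_*$, and the verification of condition (iii) via the absence of subsymmetric sequences written out in full. The bookkeeping at shared exponents matches Remark~\ref{rmk:stopping}, so nothing further is needed.
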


\begin{theorem}\label{thm:11}
Let $A\subseteq\{1,2,\infty\}$, $B\subseteq\{1,2\}$ and $D\subseteq \{ 2,\infty\}$. Then $(\bigoplus_{p\in A} \ell_p) \oplus (\bigoplus_{r\in B} \Ts^{(r)}) \oplus (\bigoplus_{s\in D} \Ts_*^{(s)})$ has a UTAP unconditional basis (we replace $\ell_\infty$ with $c_0$ is $p\in A$).
\end{theorem}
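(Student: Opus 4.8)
The plan is to obtain Theorem~\ref{thm:11} as a clean consequence of the splitting theorem established just above, upgraded by the fact that each individual summand has a unique unconditional basis. Write $Z=(\bigoplus_{p\in A}\ell_p)\oplus(\bigoplus_{r\in B}\Ts^{(r)})\oplus(\bigoplus_{s\in D}\Ts_*^{(s)})$, with the convention $\ell_\infty=c_0$. The mechanism is the one isolated in the ``in particular'' clause of Theorem~\ref{thm:split}: once we know that \emph{every} unconditional basis of $Z$ splits into sub-bases spanning isomorphic copies of the summands, uniqueness for each summand forces uniqueness for $Z$. Concretely, given a semi-normalized unconditional basis $\Uf$ of $Z$, the splitting produces a partition $\Uf=\bigsqcup_j\Uf_j$ with $[\Uf_j]$ isomorphic to the $j$-th summand $\XX_j$; since $\XX_j$ is UTAP, each $\Uf_j$ is equivalent to a permutation of the canonical basis $\BB_j$ of $\XX_j$, and concatenating these finitely many equivalences shows that $\Uf$ is equivalent to a permutation of $\bigsqcup_j\BB_j$, the canonical basis of $Z$.

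The first thing I would check is therefore that each summand is UTAP. For $\ell_1$, $\ell_2$ and $c_0$ this is the classical theorem that these are the only Banach spaces with a unique unconditional basis. For the convexified Tsirelson space $\Ts^{(2)}$ it is the theorem of Bourgain, Casazza, Lindenstrauss and Tzafriri \cite{BCLT1985}; the corresponding statements for $\Ts=\Ts^{(1)}$ and for the duals $\Ts_*^{(2)}$ and $\Ts^*=\Ts_*^{(\infty)}$ are also available, with Proposition~\ref{prop:SSTs} supplying the crucial structural input that none of these spaces carries a subsymmetric basic sequence, so that no competing symmetric basis can intrude. In particular the lattice anti-Euclidean summands $\ell_1,c_0,\Ts,\Ts^*$ are covered by the circle of ideas in \cite{AlbiacAnsorena2020b}.

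The second ingredient is the splitting itself, which I would simply import from the preceding theorem applied to the admissible finite sets $A\subseteq\{1,2,\infty\}\subseteq[1,\infty]$, $B\subseteq\{1,2\}\subseteq[1,\infty)$ and $D\subseteq\{2,\infty\}\subseteq(1,\infty]$: it asserts exactly that $(\ell_p)_{p\in A}\sqcup(\Ts^{(r)})_{r\in B}\sqcup(\Ts_*^{(s)})_{s\in D}$ is splitting for unconditional bases. Combined with the previous paragraph this yields that $Z$ is UTAP.

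The step I expect to carry the real weight is not this bookkeeping but the splitting input for the two \emph{endpoint} summands $\Ts=\Ts^{(1)}$ and $\Ts^*=\Ts_*^{(\infty)}$. Unlike $\ell_2$, $\Ts^{(2)}$ and $\Ts_*^{(2)}$, which slot into the lattice-estimate disjoint chain $\ell_1\to\Ts_*^{(2)}\to\ell_2\to\Ts^{(2)}\to c_0$ (non-decreasing indices $1,2,2,\infty$, with condition (iii) of Definition~\ref{def:IC} furnished throughout by Proposition~\ref{prop:SSTs}), the spaces $\Ts$ and $\Ts^*$ admit only the trivial lattice estimates, namely an upper $1$-estimate and a lower $\infty$-estimate. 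Consequently $\Ts$ can never sit inside a single lattice-estimate disjoint chain that also contains an index-$2$ summand: the only admissible lower estimate it offers is the trivial one at $q=\infty$, which forces every subsequent transition index to be $\infty$ and collapses the chain down to $\{\ell_1,\Ts,c_0\}$; dually for $\Ts^*$. This is exactly the point at which the preceding theorem must go beyond a bare application of Theorem~\ref{thm:split}, splicing the lattice-estimate disjoint chain for the non-degenerate summands with the independent splitting of the anti-Euclidean block $\{\ell_1,c_0,\Ts,\Ts^*\}$ through the reduction Lemma~\ref{lem:SUBfromcouples}, with $\ell_1$ and $c_0$ acting as the hinges along which the two pieces are joined. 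Verifying that this gluing is legitimate—that is, that the relevant cross-operators can be controlled despite the complete absence of any lattice-estimate gap at the endpoints—is the genuine obstacle; once the splitting theorem is secured, the passage to Theorem~\ref{thm:11} is immediate.
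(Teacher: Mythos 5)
Your overall reduction is the right one and is what the paper intends: each summand has a UTAP unconditional basis, the family is splitting for unconditional bases, and splitting plus uniqueness for the summands yields uniqueness for the direct sum. The problem is your final paragraph, which rests on a false premise and then leaves the allegedly hard step unproved. You assert that $\Ts=\Ts^{(1)}$ and $\Ts^*=\Ts_*^{(\infty)}$ ``admit only the trivial lattice estimates, namely an upper $1$-estimate and a lower $\infty$-estimate,'' and conclude that they cannot be inserted into a lattice-estimate disjoint chain containing an index-$2$ summand. This is wrong: $\Ts$ satisfies a lower $q$-estimate for \emph{every} $q>1$, and dually $\Ts^*$ satisfies an upper $r$-estimate for every $r<\infty$. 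Indeed, this is exactly what makes the gap method of \cite{AAW2020b} apply to $\Ts\oplus\ell_2$, $\Ts\oplus\Ts^{(2)}$, $\ell_2\oplus\Ts^*$, etc., as the introduction of the present paper explicitly recalls; only the pairs $\ell_1\oplus\Ts$, $\ell_2\oplus\Ts^{(2)}$, and $c_0\oplus\Ts^*$ lack a gap, and those are precisely the pairs that condition (iii) of Definition~\ref{def:IC} (no copy of $\ell_{q_j}$, furnished by Proposition~\ref{prop:SSTs}) is designed to handle.

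Consequently the ``genuine obstacle'' you identify does not exist, and the splicing through Lemma~\ref{lem:SUBfromcouples} with $\ell_1$ and $c_0$ as hinges --- which you flag as the step carrying the real weight and then do not carry out --- is not needed. The seven possible summands arrange into the single lattice-estimate disjoint chain
\[
\ell_1,\ \Ts,\ \Ts_*^{(2)},\ \ell_2,\ \Ts^{(2)},\ \Ts^*,\ c_0
\]
with non-decreasing transition indices $1,\,q,\,2,\,2,\,r,\,\infty$ for any choice of $1<q<2<r<\infty$: each space satisfies the required lower estimate, its successor the required upper estimate, and at every step at least one of the two spaces contains no copy of the relevant $\ell_{q_j}$ by Proposition~\ref{prop:SSTs} (the two repeated indices equal to $2$ are permitted by Remark~\ref{rmk:stopping} because the middle space there is $\ell_2$ itself). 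The paper's proof is therefore the bare application of Theorem~\ref{thm:split} that you rule out. As written, your argument is incomplete: it reduces the theorem to a splitting statement, misdiagnoses why that statement holds, and leaves its proof as an unresolved ``obstacle.'' To repair it, replace the last paragraph with the correct lattice estimates for $\Ts$ and $\Ts^*$ and the chain above, and the proof collapses to the paper's.
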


Finally we show some applications of our results to the uniqueness of structure of nonlocally convex quasi-Banach spaces. We note that Theorem~\ref{last} below remains valid if we replace the Hardy space $H_p(\TT^d)$ with any of the spaces with a unique unconditional basis from \cite{AlbiacAnsorena2020b}*{Corollary 6.2}.

\begin{theorem}\label{last}
Let $0<p<1$ and $d\in\NN$. Let $A\subseteq\{1,2,\infty\}$, $B\subseteq\{1,2\}$ and $D\subseteq \{ 2,\infty\}$. Then the space
\[H_p(\TT^d)\oplus(\oplus_{p\in A} \ell_p) \oplus (\oplus_{r\in B} \Ts^{(r)}) \oplus (\oplus_{s\in D} \Ts_*^{(s)})
\] has a UTAP unconditional basis (we replace $\ell_\infty$ with $c_0$ if $p\in A$).
\end{theorem}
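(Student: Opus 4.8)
The plan is to peel $H_p(\TT^d)$ off as a direct summand and then appeal to the uniqueness already available for the two pieces. Write
\[
\VV:=\Bigl(\bigoplus_{p\in A}\ell_p\Bigr)\oplus\Bigl(\bigoplus_{r\in B}\Ts^{(r)}\Bigr)\oplus\Bigl(\bigoplus_{s\in D}\Ts_*^{(s)}\Bigr),
\]
so that the space under consideration is $H_p(\TT^d)\oplus\VV$. By Theorem~\ref{thm:11} (and its proof through Theorem~\ref{thm:split}), $\VV$ has a UTAP unconditional basis and the summands of $\VV$ already form a family that is splitting for unconditional bases. Since $0<p<1$, the quasi-Banach space $H_p(\TT^d)$ likewise has a UTAP unconditional basis; the same scheme applies verbatim to any of the spaces in \cite{AlbiacAnsorena2020b}*{Corollary~6.2}. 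By Lemma~\ref{lem:SUBfromcouples}, the theorem will follow once the enlarged family --- the summands of $\VV$ together with $H_p(\TT^d)$ --- is shown to be splitting for unconditional bases, for then the UTAP property of each summand forces $H_p(\TT^d)\oplus\VV$ to have a UTAP unconditional basis. As the summands of $\VV$ already split among themselves, the one genuinely new task is to split $H_p(\TT^d)$ from each of them.

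First I would record the lattice data of $H_p(\TT^d)$: being $p$-Banach it satisfies an upper $p$-estimate, while its natural (wavelet/atomic) unconditional basis is $2$-concave, i.e.\ satisfies a lower $2$-estimate. Because $p<1$ and every Banach space satisfies an upper $1$-estimate, the only exponent capable of producing a lattice gap between $H_p(\TT^d)$ and a summand of $\VV$ is $q=2$. Accordingly, for each summand $\UU$ of $\VV$ that satisfies an upper $2$-estimate and contains no copy of $\ell_2$ --- this covers $\Ts^{(2)}$ and, where present, $c_0$ --- the couple $(H_p(\TT^d),\UU)$ is lattice-estimate disjoint in the sense of Definition~\ref{def:IC}. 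Corollary~\ref{cor:CCC} then makes every bounded operator $\UU\to H_p(\TT^d)$ compact, and a quasi-Banach analogue of Wojtaszczyk's splitting theorem (Theorem~\ref{Woj:Split}) splits the couple. The summand $\ell_2$, when present, causes no trouble at all: a lacunary copy of $\ell_2$ is complemented in $H_p(\TT^d)$, so $H_p(\TT^d)\oplus\ell_2\cong H_p(\TT^d)$ and the extra copy is simply absorbed.

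The hard part will be the summands that share the concavity exponent $2$ with $H_p(\TT^d)$ but against which there is no gap --- above all $\Ts_*^{(2)}$ (a lower $2$-estimate, no copy of $\ell_2$, but no upper $2$-estimate), and also the anti-Euclidean Banach summands $\ell_1$, $\Ts$, and $\Ts_*^{(\infty)}$. For these couples the Pitt-type results of Section~\ref{sect:SpBanach} give nothing: there are even strictly singular non-compact operators $\ell_1\to H_p(\TT^d)$ (factoring through a lacunary $\ell_2$), so Corollary~\ref{cor:1} does not apply, and Theorem~\ref{thm:split} is unavailable because $H_p(\TT^d)$ is genuinely nonlocally convex, which also rules out Theorem~\ref{Woj:Split} in its stated Banach form. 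The crux is therefore to prove a genuinely quasi-Banach splitting principle valid for couples such as $(H_p(\TT^d),\Ts_*^{(2)})$; this is exactly where one must invoke the nonlocally convex uniqueness machinery of \cite{AlbiacAnsorena2020b} underlying its Corollary~6.2. Granting that input, Lemma~\ref{lem:SUBfromcouples} reassembles the splitting of the full family, and together with the UTAP property of each summand this completes the proof.
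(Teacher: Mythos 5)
Your proposal correctly diagnoses where the machinery of this paper stops working, but it does not close the resulting hole, and the paper's actual proof goes by a different route that you do not identify. You reduce the theorem to splitting the couples $(H_p(\TT^d),\UU)$ for each summand $\UU$, observe that for $\UU\in\{\ell_1,\Ts,\Ts_*^{(2)},\Ts_*^{(\infty)},\ell_2\}$ no lattice gap is available, and then write ``granting that input'' for precisely those couples. That is the entire content of the theorem left unproved: the Pitt-type results of Section~\ref{sect:SpBanach} and Theorem~\ref{Woj:Split} were never going to carry the load here (as you note, Theorem~\ref{Woj:Split} is a Banach-space statement and $H_p(\TT^d)$ is nonlocally convex, and there are non-compact operators $\ell_1\to H_p(\TT^d)$ factoring through $\ell_2$), so an appeal to unspecified ``machinery underlying Corollary~6.2 of \cite{AlbiacAnsorena2020b}'' is not a proof. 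Even your treatment of the $\ell_2$ summand by absorption ($H_p(\TT^d)\oplus\ell_2\simeq H_p(\TT^d)$) presupposes that $H_p(\TT^d)$ itself has a UTAP unconditional basis, which again is exactly the kind of input you are leaving to the other paper.

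The missing idea is \emph{subprojectivity}. The paper does not split $H_p(\TT^d)$ from the Banach summands one at a time; it shows that the whole Banach part $\VV=(\bigoplus_{p\in A}\ell_p)\oplus(\bigoplus_{r\in B}\Ts^{(r)})\oplus(\bigoplus_{s\in D}\Ts_*^{(s)})$ is subprojective, using \cite{Pel1960}*{Lemma 2}, \cite{AAW2020b}*{Theorem 3.12} and \cite{OS2015}*{Proposition 2.2} (subprojectivity passes to finite direct sums). Then \cite{AAW2020b}*{Theorem 4.1} --- the genuinely quasi-Banach splitting principle you were looking for, whose hypothesis on the Banach factor is subprojectivity rather than compactness of operators or lattice gaps --- splits any unconditional basis of $H_p(\TT^d)\oplus\VV$ into a basis of $H_p(\TT^d)$ and a basis of $\VV$, and Theorem~\ref{thm:11} supplies the UTAP property of $\VV$. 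To repair your argument you would need to replace the ``granting that input'' step by this subprojectivity argument (or prove an equivalent quasi-Banach splitting statement for each problematic couple, which in effect reproves \cite{AAW2020b}*{Theorem 4.1}).
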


\begin{proof}By \cite{Pel1960}*{Lemma 2}, \cite{AAW2020b}*{Theorem 3.12} and \cite{OS2015}*{Proposition 2.2}, $(\bigoplus_{p\in A} \ell_p) \oplus (\bigoplus_{r\in B} \Ts^{(r)}) \oplus (\bigoplus_{s\in D} \Ts_*^{(s)})$ is subprojective. Now the result follows by combining \cite{AAW2020b}*{Theorem 4.1} with Theorem~\ref{thm:11}.
\end{proof}


\begin{bibdiv}
\begin{biblist}

\bib{AlbiacAnsorena2020b}{article}{
author={Albiac, F.},
author={Ansorena, J.~L.},
title={On the permutative equivalence of squares of unconditional
bases},
date={2020},
journal={arXiv e-prints},
eprint={2002.09010},
}

\bib{AADK2019}{article}{
author={Albiac, F.},
author={Ansorena, J.~L.},
author={Dilworth, S.~J.},
author={Kutzarova, D.},
title={Subsymmetric sequences in {G}arling spaces},
date={2019},
journal={arXiv e-prints},
eprint={1910.02414},
note={To appear in press in Transactions of the American Mathematical
Society},
}

\bib{AAW2020b}{article}{
author={Albiac, Fernando},
author={Ansorena, Jos\'e~L.},
author={Wojtaszczyk, P.},
title={Uniqueness of unconditional basis of
${H_p(\mathbb{T})\oplus\ell_{2}}$ and ${H_p(\TT)\oplus \mathcal{T}^{(2)}}$
for ${0<p<1}$},
date={2020},
journal={arXiv e-prints},
eprint={2002.09010},
}

\bib{BCLT1985}{article}{
author={Bourgain, J.},
author={Casazza, P.~G.},
author={Lindenstrauss, J.},
author={Tzafriri, L.},
title={Banach spaces with a unique unconditional basis, up to
permutation},
date={1985},
ISSN={0065-9266},
journal={Mem. Amer. Math. Soc.},
volume={54},
number={322},
pages={iv+111},
url={https://doi-org/10.1090/memo/0322},
review={\MR{782647}},
}

\bib{CasShura1989}{book}{
author={Casazza, Peter~G.},
author={Shura, Thaddeus~J.},
title={Tsire{l\cprime}son's space},
series={Lecture Notes in Mathematics},
publisher={Springer-Verlag, Berlin},
date={1989},
volume={1363},
ISBN={3-540-50678-0},
url={https://doi-org/10.1007/BFb0085267},
note={With an appendix by J. Baker, O. Slotterbeck and R. Aron},
review={\MR{981801}},
}

\bib{DLMR2000}{article}{
author={Defant, A.},
author={L\'{o}pez-Molina, J.~A.},
author={Rivera, M.~J.},
title={On {P}itt's theorem for operators between scalar and
vector-valued quasi-{B}anach sequence spaces},
date={2000},
ISSN={0026-9255},
journal={Monatsh. Math.},
volume={130},
number={1},
pages={7\ndash 18},
url={https://doi-org/10.1007/s006050050083},
review={\MR{1762060}},
}

\bib{EdelWoj1976}{article}{
author={\`Edel{\cprime}\v{s}te\u{\i}n, I.~S.},
author={Wojtaszczyk, P.},
title={On projections and unconditional bases in direct sums of {B}anach
spaces},
date={1976},
ISSN={0039-3223},
journal={Studia Math.},
volume={56},
number={3},
pages={263\ndash 276},
url={https://doi-org/10.4064/sm-56-3-263-276},
review={\MR{425585}},
}

\bib{FigielLindenstraussMilman1977}{article}{
author={Figiel, T.},
author={Lindenstrauss, J.},
author={Milman, V.~D.},
title={The dimension of almost spherical sections of convex bodies},
date={1977},
ISSN={0001-5962},
journal={Acta Math.},
volume={139},
number={1-2},
pages={53\ndash 94},
url={https://doi.org/10.1007/BF02392234},
review={\MR{445274}},
}

\bib{Johnson1979-80}{incollection}{
author={Johnson, W.~B.},
title={Banach spaces all of whose subspaces have the approximation
property},
date={1980},
booktitle={Seminar on {F}unctional {A}nalysis, 1979--1980 ({F}rench)},
publisher={\'{E}cole Polytech., Palaiseau},
pages={Exp. No. 16, 11},
review={\MR{604397}},
}

\bib{OS2015}{article}{
author={Oikhberg, T.},
author={Spinu, E.},
title={Subprojective {B}anach spaces},
date={2015},
ISSN={0022-247X},
journal={J. Math. Anal. Appl.},
volume={424},
number={1},
pages={613\ndash 635},
url={https://doi-org/10.1016/j.jmaa.2014.11.008},
review={\MR{3286583}},
}

\bib{Pel1960}{article}{
author={Pe{\l}czy\'{n}ski, A.},
title={Projections in certain {B}anach spaces},
date={1960},
ISSN={0039-3223},
journal={Studia Math.},
volume={19},
pages={209\ndash 228},
url={https://doi-org/10.4064/sm-19-2-209-228},
review={\MR{126145}},
}

\bib{Pitt1936}{article}{
author={Pitt, H.~R.},
title={A {N}ote on {B}ilinear {F}orms},
date={1936},
ISSN={0024-6107},
journal={J. London Math. Soc.},
volume={11},
number={3},
pages={174\ndash 180},
url={https://doi.org/10.1112/jlms/s1-11.3.174},
review={\MR{1574344}},
}

\bib{Woj1978}{article}{
author={Wojtaszczyk, P.},
title={On projections and unconditional bases in direct sums of {B}anach
spaces. {II}},
date={1978},
ISSN={0039-3223},
journal={Studia Math.},
volume={62},
number={2},
pages={193\ndash 201},
url={https://doi-org/10.4064/sm-62-2-193-201},
review={\MR{500084}},
}

\end{biblist}
\end{bibdiv}

\end{document}